\documentclass{amsart}
\usepackage{mathrsfs,latexsym,amsfonts,amssymb}

\newtheorem{theorem}{Theorem}[section]
\newtheorem{lemma}[theorem]{Lemma}

\newtheorem{question}[theorem]{Question}
\theoremstyle{definition}
\newtheorem{definition}[theorem]{Definition}

\theoremstyle{remark}

\begin{document}

\title[topological monomorphisms between free paratopological groups]
{topological monomorphisms between free paratopological groups}

\author{Fucai Lin}
\address{Fucai Lin(corresponding author): Department of Mathematics and Information Science,
Zhangzhou Normal University, Zhangzhou 363000, P. R. China}
\email{linfucai2008@yahoo.com.cn}

\thanks{Supported by the NSFC (No. 10971185, No. 10971186) and the Natural Science Foundation of Fujian Province (No. 2011J05013) of China.}

\keywords{Free paratopological groups; topological monomorphism; free topological groups; quasi-uniform; quasi-P$^{\ast}$-embedded; quasi-P-embedded; quasi-pseudometric; universal quasi-uniformity.}
\subjclass[2000]{54C20; 54C25; 54D35; 54E15; 54H11}

\begin{abstract}
Suppose that $X$ is a subspace of a Tychonoff space $Y$. Then the embedding mapping $e_{X, Y}: X\rightarrow Y$ can be extended to a continuous monomorphism $\hat{e}_{X, Y}: AP(X)\rightarrow AP(Y)$, where $AP(X)$ and $AP(Y)$ are the free Abelian paratopological groups over $X$ and $Y$, respectively. In this paper, we mainly discuss when $\hat{e}_{X, Y}$ is a topological monomorphism, that is, when $\hat{e}_{X, Y}$ is a topological embedding of $AP(X)$ to $AP(Y)$.
\end{abstract}

\maketitle

\section{Introduction}
In 1941, free topological groups were introduced by A.A. Markov in \cite{MA} with the clear idea of extending the well-known construction of a free group from group theory to topological groups. Now, free topological groups have become a powerful tool of study in the theory of topological groups and serve as a source of various examples and as an instrument for proving new theorems, see \cite{A2008, GM, NP}.

In \cite{GM}, M.I. Graev extended continuous pseudometrics on a space $X$ to invariant continuous pseudometrics on $F(X)$ (or $A(X)$). Apparently, the description of a local base at the neutral element of the free Abelian topological group $A(X)$ in terms of continuous pseudometric on $X$ was known to M.I. Graev, but appeared explicitly in \cite{MD} and \cite{NP}. When working with free topological groups, it is also very important to know under which conditions on a subspace $X$ of a Tychonoff space $Y$, the subgroup $F(X, Y)$ of $F(Y)$ generated by $X$ is topologically isomorphic to the group $F(X)$, under the natural isomorphism extending the identity embedding of $X$ to $Y$. V.G. Pestov and E.C. Nummela gave some answers (see e.g. Theorem~\ref{t1}) in \cite{PV} and \cite{NE}, respectively. In the Abelian case, M.G. Tkackenko gave an answer in \cite{TM1}, see Theorem~\ref{t2}.

It is well known that paratopological groups are
good generalizations of topological groups, see e.g. \cite{A2008}. The Sorgenfrey line
(\cite[Example 1.2.2]{E1989}) with the usual addition is a
first-countable paratopological group but not a topological group. The absence of continuity of inversion,
the typical situation in paratopological groups, makes the study in this area
very different from that in topological groups. Paratopological groups attract
a growing attention of many mathematicians and articles in recent years.
As in free topological groups, S. Romaguera, M. Sanchis and M.G. Tkackenko in \cite{RS} define free paratopological groups. Recently, N.M. Pyrch has investigated some properties of free paratopological groups, see \cite{PN1, PN, PN2}. In this paper, we will discuss the topological monomorphisms between free paratopological groups, and extend several results valid
for free (abelian) topological groups to free (abelian) paratopological groups.
\maketitle

\section{Preliminaries}
Firstly, we introduce some notions and terminology.

Recall that a {\it topological group} $G$ is a group $G$ with a
(Hausdorff) topology such that the product mapping of $G \times G$ into
$G$ is jointly continuous and the inverse mapping of $G$ onto itself
associating $x^{-1}$ with an arbitrary $x\in G$ is continuous. A {\it
paratopological group} $G$ is a group $G$ with a topology such that
the product mapping of $G \times G$ into $G$ is jointly continuous.

\begin{definition}\cite{MA}
Let $X$ be a subspace of a topological group $G$. Assume that
\begin{enumerate}
\item The set $X$ generates $G$ algebraically, that is $<X>=G$;

\item  Each continuous mapping $f: X\rightarrow H$ to a topological group $H$ extends to a continuous homomorphism $\hat{f}: G\rightarrow H$.
\end{enumerate}
Then $G$ is called the {\it Markov free topological group on} $X$ and is denoted by $F(X)$.
\end{definition}

\begin{definition}\cite{RS}
Let $X$ be a subspace of a paratopological group $G$. Assume that
\begin{enumerate}
\item The set $X$ generates $G$ algebraically, that is $<X>=G$;

\item  Each continuous mapping $f: X\rightarrow H$ to a paratopological group $H$ extends to a continuous homomorphism $\hat{f}: G\rightarrow H$.
\end{enumerate}
Then $G$ is called the {\it Markov free paratopological group on} $X$ and is denoted by $FP(X)$.
\end{definition}

Again, if all the groups in the above definitions are Abelian, then we get the definitions of the {\it Markov free Abelian topological group} and the {\it Markov free Abelian paratopological group on} $X$ which will be denoted by $A(X)$ and $AP(X)$ respectively.

By a {\it quasi-uniform space} $(X, \mathscr{U})$ we mean the natural analog of a {\it uniform space} obtained by dropping the symmetry axiom. For each quasi-uniformity $\mathscr{U}$ the filter  $\mathscr{U}^{-1}$ consisting of the inverse relations $U^{-1}=\{(y, x): (x, y)\in U\}$ where $U\in\mathscr{U}$ is called the {\it conjugate quasi-uniformity} of $\mathscr{U}$. We recall that the standard base of the {\it left quasi-uniformity}  $\mathscr{G}_{G}$ on a paratopological group $G$ consists of the sets $$W_{U}^{l}=\{(x, y)\in G\times G: x^{-1}y\in U\},$$where $U$ is an arbitrary open neighborhood of the neutral element in $G$.
If $X$ is a subspace of $G$, then the base of the left induced quasi-uniformity $\mathscr{G}_{X}=\mathscr{G}_{G}\mid X$ on $X$ consists of the sets $$W_{U}^{l}\cap (X\times X)=\{(x, y)\in X\times X: x^{-1}y\in U\}.$$Similarly, we can define the {\it right induced quasi-uniformity} on $X$.

We also recall that the {\it universal quasi-uniformity} $\mathscr{U}_{X}$ of a space $X$ is the finest quasi-uniformity on $X$ that induces on $X$ its original topology. Throughout this paper, if $\mathscr{U}$ is a quasi-uniformity of a space $X$ then $\mathscr{U}^{\ast}$ denotes the
smallest uniformity on $X$ that contains $\mathscr{U}$, and $\tau (\mathscr{U})$ denotes the topology of $X$ generated by $\mathscr{U}$. A quasi-uniform space $(X, \mathscr{U})$ is called {\it bicomplete} if $(X, \mathscr{U}^{\ast})$ is complete.

\begin{definition}
A function $f: (X, \mathscr{U})\rightarrow (Y, \mathscr{V})$ is called {\it quasi-uniformly continuous} if for each $V\in \mathscr{V}$ there exists an $U\in\mathscr{U}$ such that $(f(x), f(y))\in V$ whenever $(x, y)\in U,$ where $\mathscr{U}$ and $\mathscr{V}$ are quasi-uniformities for $X$ and $Y$ respectively.
\end{definition}

\begin{definition}
A {\it quasi-pseudometric} $d$ on a set $X$ is a function
from $X\times X$ into the set of non-negative real numbers such that for
$x, y, z\in X$: (a) $d(x, x)=0$ and (b) $d(x, y)\leq d(x, z)+d(z, y)$. If $d$ satisfies the additional condition (c) $d(x, y)=0\Leftrightarrow x=y$, then $d$ is called a {\it quasi-metric} on $X$.
\end{definition}

Every quasi-pseudometric $d$ on $X$ generates a topology $\mathscr{F}(d)$ on $X$ which has as a
base the family of $d$-balls $\{B_{d}(x, r): x\in X, r>0\}$, where $B_{d}(x, r)=\{y\in X:
d(x, y)< r\}$.

A topological space $(X, \mathscr{F})$ is called {\it quasi-(pseudo)metrizable} if there is a quasi-
(pseudo)metric $d$ on $X$ compatible with $\mathscr{F}$, where $d$ is compatible with $\mathscr{F}$ provided $\mathscr{F}=\mathscr{F}(d).$

Denote by $\mathscr{U}^{\star}$ the upper quasi-uniformity on $\mathbb{R}$ the standard base of which consists of the sets $$U_{r}=\{(x, y)\in \mathbb{R}\times \mathbb{R}: y<x+r\},$$where $r$ is an arbitrary positive real number.

\begin{definition}
Given a group $G$ with the neutral element $e$, a function $N: G\rightarrow [0,\infty)$
is called a {\it quasi-prenorm} on $G$ if the following conditions are satisfied:
\begin{enumerate}
\item $N(e)=0$; and
\item $N(gh)\leq N(g)+N(h)$ for all $g, h\in G$.
\end{enumerate}
\end{definition}

\begin{definition}
Let $X$ be a subspace of a Tychonoff space $Y$.
\begin{enumerate}
\item The subspace $X$ is {\it P-embedded} in $Y$ if each continuous pseudometric on $X$
admits a continuous extension over $Y$;

\item The subspace $X$ is {\it P$^{\ast}$-embedded} in $Y$ if each bounded continuous pseudometric on $X$
admits a continuous extension over $Y$;

\item The subspace $X$ is {\it quasi-P-embedded} in $Y$ if each continuous quasi-pseudometric from $(X\times X, \mathscr{U}_{X}^{-1}\times \mathscr{U}_{X})$ to $(\mathbb{R}, \mathscr{U}^{\star})$
admits a continuous extension from $(Y\times Y, \mathscr{U}_{Y}^{-1}\times \mathscr{U}_{Y})$ to $(\mathbb{R}, \mathscr{U}^{\star})$;

\item The subspace $X$ is {\it quasi-P$^{\ast}$-embedded} in $Y$ if each bounded continuous quasi-pseudometric from $(X\times X, \mathscr{U}_{X}^{-1}\times \mathscr{U}_{X})$ to $(\mathbb{R}, \mathscr{U}^{\star})$
admits a continuous extension from $(Y\times Y, \mathscr{U}_{Y}^{-1}\times \mathscr{U}_{Y})$ to $(\mathbb{R}, \mathscr{U}^{\star})$.
\end{enumerate}
\end{definition}

Throughout this paper, we use $G(X)$ to denote the topological groups $F(X)$ or $A(X)$, and $PG(X)$ to denote the paratopological groups $FP(X)$ or $AP(X)$. For a subset $Y$ of a space $X$, we use $G(Y, X)$ and $PG(Y, X)$ to denote the subgroups of $G(X)$ and $PG(X)$ generated by $Y$ respectively. Moreover, we denote the abstract groups of $F(X), FP(X)$ by $F_{a}(X)$ and of $A(X)$ and $AP(X)$ by $A_{a}(X)$, respectively.

Since $X$ generates the free group $F_{a}(X)$, each element $g\in F_{a}(X)$ has the form $g=x_{1}^{\varepsilon_{1}}\cdots x_{n}^{\varepsilon_{n}}$, where $x_{1}, \cdots, x_{n1}\in X$ and $\varepsilon_{1}, \cdots, \varepsilon_{n}=\pm 1$. This word for $g$ is called {\it reduced} if it contains no pair of consecutive symbols of the form $xx^{-1}$ or $x^{-1}x$. It follow that if the word $g$ is reduced and non-empty, then it is different from the neutral element of $F_{a}(X)$. In particular, each element $g\in F_{a}(X)$ distinct from the neutral element can be uniquely written in the form $g=x_{1}^{r_{1}}x_{2}^{r_{2}}\cdots x_{n}^{r_{n}}$, where $n\geq 1$, $r_{i}\in \mathbb{Z}\setminus\{0\}$, $x_{i}\in X$, and $x_{i}\neq x_{i+1}$ for each $i=1, \cdots, n-1$. Such a word is called the {\it normal form} of $g$. Similar assertions are valid for $A_{a}(X)$.

We denote by $\mathbb{N}$ the set of all natural
numbers. The letter $e$
denotes the neutral element of a group. Readers may consult
\cite{A2008, E1989, Gr1984} for notations and terminology not
explicitly given here.

\bigskip

\section{Backgrounds}
If $X$ is an arbitrary subspace of a Tychonoff space $Y$, then let $e_{X, Y}$ be the natural embedding mapping from $X$ to $Y$. The following two theorems are well known in the theory of free topological groups.

\begin{theorem}\cite[Nummela-Pestov]{NE, PV}\label{t1}
Let $X$ be a dense subspace of a Tychonoff space $Y$. Then the embedding mapping $e_{X, Y}$ can be extended to a topological monomorphism $\hat{e}_{X, Y}: F(X)\rightarrow F(Y)$ if and only if $X$ is P-embedded in $Y$.
\end{theorem}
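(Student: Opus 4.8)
The plan is to run the whole argument through Graev's extension of continuous pseudometrics, since (as recalled in the introduction) a base of neighbourhoods of the neutral element $e$ of $F(X)$ is described in those terms. First I dispose of the algebraic and the ``easy'' topological content. The composite of $e_{X,Y}$ with the inclusion $Y\hookrightarrow F(Y)$ is a continuous map of $X$ into the topological group $F(Y)$, so by the defining universal property of $F(X)$ it extends to a continuous homomorphism $\hat e_{X,Y}\colon F(X)\to F(Y)$; on the level of abstract groups this is the canonical inclusion $F_a(X)\hookrightarrow F_a(Y)$ carrying free generators to free generators, hence it is injective. Thus $\hat e_{X,Y}$ is always a continuous monomorphism onto the subgroup $F(X,Y)$, and the only issue is whether it is a homeomorphism onto $F(X,Y)$ equipped with the subspace topology inherited from $F(Y)$.

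Recall Graev's device: each continuous pseudometric $d$ on $X$ (with the neutral element adjoined) extends to the \emph{maximal} invariant continuous pseudometric $\hat d$ on $F(X)$ restricting to $d$ on $X$, and the balls $V_{\hat d}=\{g\in F(X):\hat d(e,g)<1\}$ form a base at $e$ as $d$ ranges over all continuous pseudometrics on $X$; the analogous description holds for $F(Y)$. The engine of the proof is the \emph{compatibility lemma}: if $\rho$ is a continuous pseudometric on $Y$ with $\rho|_{X\times X}=d$, then the Graev extensions satisfy $\hat\rho|_{F(X,Y)}=\hat d$ under the identification $F(X)\cong F(X,Y)$.

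For sufficiency ($X$ P-embedded $\Rightarrow$ $\hat e_{X,Y}$ a topological embedding) I take an arbitrary continuous pseudometric $d$ on $X$, invoke P-embeddedness to extend it to a continuous pseudometric $\rho$ on $Y$, and apply the compatibility lemma to obtain $V_{\hat\rho}\cap F(X,Y)=V_{\hat d}$. Hence every basic neighbourhood of $e$ in $F(X)$ is the trace on $F(X,Y)$ of a neighbourhood of $e$ in $F(Y)$; together with the automatic continuity of $\hat e_{X,Y}$ this shows the two topologies on $F(X,Y)$ agree, so $\hat e_{X,Y}$ is a topological embedding. For necessity I reverse the reasoning: given $d$ on $X$, the embedding hypothesis supplies a neighbourhood of $e$ in $F(Y)$ whose trace on $F(X,Y)$ sits inside $V_{\hat d}$; shrinking it to a Graev ball $V_{\hat\rho}$ and evaluating the resulting inequality on pairs of generators $x,x'\in X$ (where $\hat\rho(e,x(x')^{-1})=\rho(x,x')$ and $\hat d(e,x(x')^{-1})=d(x,x')$) shows $d$ is dominated by $\rho|_X$. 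Since this holds for every $d$, the universal uniformity $\mathscr{U}_X$ coincides with the uniformity induced from $\mathscr{U}_Y$, and a Katětov-type extension of uniformly continuous pseudometrics upgrades this domination to a genuine continuous extension of $d$ over $Y$, which is exactly P-embeddedness.

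The main obstacle is the compatibility lemma. The inequality $\hat\rho|_{F(X,Y)}\le\hat d$ is immediate from the maximality of $\hat d$ among invariant pseudometrics extending $d$. The reverse inequality is delicate, because Graev's norm is an infimum over admissible matchings of the letters of a word, computed for $g\in F(X,Y)$ inside the larger group $F(Y)$; a priori the extra freedom available in $F(Y)$ could lower the infimum below $\hat d(e,g)$. This is where the density of $X$ in $Y$ enters: density forces a continuous pseudometric on $Y$ to be determined by its values on $X$, so approximating the auxiliary $Y$-data by nearby $X$-data and using continuity of $\rho$ shows that matchings may be taken within $X$ without decreasing cost, making the two infima agree. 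Controlling this approximation uniformly over reduced words of arbitrary length, so that the errors do not accumulate, is the technical heart of the argument.
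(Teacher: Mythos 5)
First, a point of orientation: the paper does not prove Theorem~\ref{t1} at all --- it is quoted as background from Nummela \cite{NE} and Pestov \cite{PV} --- so your attempt has to be measured against the classical argument and against the machinery the paper builds for the paratopological analogues.

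The decisive gap is your ``Graev's device'': the claim that the balls $V_{\hat{d}}=\{g\in F(X): \hat{d}(e,g)<1\}$, with $d$ ranging over the continuous pseudometrics on $X$, form a base at $e$ of the \emph{non-abelian} group $F(X)$. Each Graev extension $\hat{d}$ is an invariant pseudometric, so the sets $V_{\hat{d}}$ generate a group topology with a base of invariant neighbourhoods of $e$; your claim would therefore force $F(X)$ to be a SIN (balanced) group for every Tychonoff $X$, which is known to fail outside a narrow class of spaces and is certainly unavailable in the generality of the theorem. The pseudometric description of the neighbourhood filter at $e$ is an abelian phenomenon: it is exactly what the introduction attributes (for $A(X)$) to \cite{MD} and \cite{NP}, what Theorem~\ref{t2} exploits, and what Theorem~\ref{t0} reproves for $AP(X)$; tellingly, the author remarks after Theorem~\ref{t0} that the analogous assertion for the non-abelian $FP(X)$ is \emph{not known}, and Theorem~\ref{t3} has to hypothesize a topology with invariant basis to get anything in that direction. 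Both directions of your argument lean on this false base description --- sufficiency when you declare every basic neighbourhood of $e$ in $F(X)$ to be some $V_{\hat{d}}$, and necessity when you ``shrink'' a neighbourhood of $e$ in $F(Y)$ to a Graev ball $V_{\hat{\rho}}$ --- so the plan collapses for $F$, although it does prove the abelian Theorem~\ref{t2} essentially verbatim (and is, modulo quasi-pseudometrics, the paper's own proof of its main theorem for $AP$).

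A secondary misdiagnosis: the step you call the technical heart (the compatibility lemma, via density and approximation of $Y$-letters by $X$-letters) needs no density at all. The identity $\hat{\rho}|_{F(X,Y)}=\hat{d}$ for $\rho|_{X\times X}=d$ holds for an \emph{arbitrary} subspace by Graev's support property: the infimum defining $N_{\rho}(g)$ is attained on an almost irreducible word whose letters are letters of $g$ or $e$ (the paratopological version is Lemma~\ref{l10}, quoted from \cite{RS}), so for $g\in F_a(X)$ no letters from $Y\setminus X$ ever help. Consequently your sufficiency argument would use density nowhere, and if it were sound it would prove embedding for arbitrary P-embedded subspaces --- a statement at the level of Sipacheva's theorem quoted in Section~5 --- in two pages; that is a warning sign in itself. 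Where density genuinely works is where the classical proof puts it: one identifies $F(X)$ with the free group of the fine uniform space $(X,\mathscr{U}_X)$, uses that the left uniformity of $F(Y)$ traces to $\mathscr{U}_Y$ on $Y$ (the classical analogue of Lemma~\ref{l6}), and for \emph{dense} $X$ exploits the equivalences P-embedded $\Leftrightarrow$ $\mathscr{U}_Y|X=\mathscr{U}_X$ $\Leftrightarrow$ $X\subset Y\subset\tilde{X}$, extending pseudometrics through the completion --- precisely the chain the paper reproduces in quasi-uniform form as Lemma~\ref{l9} and its final theorem for $FP$. Your closing appeal to a ``Kat\v{e}tov-type extension'' gestures at that route, but as written it rests on the same unavailable Graev-ball base in $F(Y)$.
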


\begin{theorem}\cite[M.G. Tkackenko]{TM1}\label{t2}
Let $X$ be an arbitrary subspace of a Tychonoff space $Y$. Then the embedding mapping $e_{X, Y}$ can be extended to a topological monomorphism $\hat{e}_{X, Y}: A(X)\rightarrow A(Y)$ if and only if $X$ is P$^{\ast}$-embedded in $Y$.
\end{theorem}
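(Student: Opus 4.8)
The plan is to recast the problem as a comparison of two group topologies on the abstract group $A_a(X)$: its intrinsic free topology $\tau$ (that of $A(X)$) and the topology $\tau'$ that $A_a(X)$ carries as a subgroup of $A(Y)$ under the algebraic identification supplied by $\hat e_{X,Y}$. Since $\hat e_{X,Y}$ is by hypothesis a continuous monomorphism, $\tau'\subseteq\tau$ holds automatically, and $\hat e_{X,Y}$ is a topological monomorphism exactly when $\tau\subseteq\tau'$. The entire argument thus reduces to comparing neighborhood bases at the neutral element $e$ for these two topologies.

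For this I would invoke Graev's description of the free Abelian topology: every continuous pseudometric $d$ on $X$ extends to an invariant pseudometric $\hat d$ on $A_a(X)$, and the sets $O_{d,\varepsilon}=\{g:\hat d(g)<\varepsilon\}$ form a base at $e$ for $\tau$. The Abelian-specific refinement I would use — and the reason the bounded notion P$^{\ast}$ appears rather than P — is that it suffices to let $d$ range over \emph{bounded} continuous pseudometrics. The key computational input is the matching formula for the Graev norm: for a balanced element $g=\sum_{i=1}^n(x_i-y_i)$ one has $\hat d(g)=\min_{\sigma}\sum_i d(x_i,y_{\sigma(i)})$ over permutations $\sigma$ (with the usual basepoint device in the unbalanced case). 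An immediate consequence is the restriction lemma: if $\rho$ is a continuous pseudometric on $Y$ with $\rho|_X=d$, then $\hat\rho|_{A_a(X)}=\hat d$, since every letter of $g\in A_a(X)$ lies in $X$ and $\rho$ agrees with $d$ there, so the matching value is computed identically in $X$ and in $Y$.

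With these tools the sufficiency direction is short. If $X$ is P$^{\ast}$-embedded in $Y$, then each bounded continuous pseudometric $d$ on $X$ has a continuous extension $\rho$ over $Y$, and the restriction lemma gives $O_{d,\varepsilon}=O_{\rho,\varepsilon}\cap A_a(X)$; hence each basic $\tau$-neighborhood of $e$ is $\tau'$-open, so $\tau\subseteq\tau'$ and $\hat e_{X,Y}$ is a topological monomorphism. For necessity the goal is to produce, for each bounded continuous pseudometric $d$ on $X$, a continuous pseudometric $\rho$ on $Y$ with $\hat d\le\hat\rho$ on all of $A_a(X)$; granting this global domination (see the obstacle below), evaluation on the two-letter words $x-x'$ yields the pointwise inequality $d\le\rho|_X$ on $X$, and then the inf-convolution $\bar\rho(y,y')=\min\{\rho(y,y'),\ \inf_{x,x'\in X}[\rho(y,x)+d(x,x')+\rho(x',y')]\}$ is a continuous pseudometric on $Y$ whose restriction to $X$ is exactly $d$ (the inequality $d\le\rho|_X$ together with the triangle inequality forces equality on $X$), witnessing that $X$ is P$^{\ast}$-embedded.

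The main obstacle is concentrated in the necessity direction, precisely in passing from the topological-embedding hypothesis to a \emph{single} continuous pseudometric $\rho$ on $Y$ that dominates $\hat d$ on all of $A_a(X)$ at once. The embedding only delivers, for each scale $\varepsilon$, a neighborhood-level implication trapping some Graev ball of $\hat\rho$ inside $O_{d,\varepsilon}$; because the Graev norm is not homogeneous, these scale-by-scale statements do not transparently assemble into one global domination, and this is exactly the point where boundedness of $d$ must be exploited. It is also where, for the non-Abelian group $F(X)$, unbounded pseudometrics are genuinely required, accounting for the appearance of P-embedding rather than P$^{\ast}$-embedding in Theorem~\ref{t1}. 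Establishing that a bounded $d$ can be captured with a single neighborhood — equivalently, that bounded pseudometrics generate the free Abelian topology through the matching formula — is the technical heart of the proof.
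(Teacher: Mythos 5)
Your sufficiency direction is sound and is essentially the standard argument (and the one the paper itself uses for the paratopological analogue of this statement, which it proves in Section 5 --- note the paper only \emph{cites} Theorem~\ref{t2} from \cite{TM1} without proof): extend the bounded pseudometric $d$ to $\rho$ on $Y$, observe that the Graev extension restricts correctly, $\hat\rho|_{A_a(X)}=\hat d$ (the role played by Lemma~\ref{l2} in the paper's Abelian paratopological proof), and conclude $O_{d,\varepsilon}=O_{\rho,\varepsilon}\cap A_a(X)$. Your remark that bounded pseudometrics suffice to generate the topology of $A(X)$ is also correct and correctly placed.

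The genuine gap is in necessity, and you have flagged it yourself without resolving it: the ``single global domination'' $\hat d\le\hat\rho$ on all of $A_a(X)$ is never established, and in fact it is the wrong intermediate target --- both unobtainable by any direct assembly of scale-by-scale data (as you observe, the Graev norm is not homogeneous) and unnecessary. The standard resolution, which is exactly the architecture of the paper's proof of the paratopological analogue, is to lower the ambition from the group to the uniformity. For each $\varepsilon>0$, apply the embedding hypothesis to the neighborhood $O_{d,\varepsilon}$ to obtain a continuous pseudometric $\rho_\varepsilon$ on $Y$ with $O_{\rho_\varepsilon,1}\cap A_a(X)\subseteq O_{d,\varepsilon}$, and then evaluate \emph{only on two-letter words} $-x+x'$, using $\hat\rho_\varepsilon(e,-x+x')=\rho_\varepsilon(x,x')$, to get the entourage containment $\{\rho_\varepsilon<1\}\cap(X\times X)\subseteq\{d<\varepsilon\}$. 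Ranging over all such $d$ and $\varepsilon$ this shows precisely that $\mathscr{U}_Y|X=\mathscr{U}_X$ for the universal uniformities, i.e., $X$ is a uniform subspace of $Y$ --- no inequality between Graev norms on the whole group is ever needed. What remains is then a purely uniform-space lemma (the classical analogue of the paper's Lemma~\ref{l7}): given bounded continuous $d$ on $X$, choose for each $i$ an entourage $V_i\in\mathscr{U}_Y$ with $V_i\cap(X\times X)\subseteq\{(x,x'): d(x,x')<2^{-i}\}$, subordinate to each $V_i$ a continuous pseudometric $d_i$ on $Y$ bounded by $1$ via the standard pseudometrization lemma (the quasi-uniform version is the paper's Lemma~\ref{l0}), and set $\rho=8\sum_{i}2^{-i}d_i$. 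This single $\rho$ is continuous on $Y$ and satisfies the \emph{pointwise} bound $d\le\rho$ on $X\times X$, which is all that your inf-convolution $\bar\rho(y,y')=\min\{\rho(y,y'),\inf_{x,x'\in X}[\rho(y,x)+d(x,x')+\rho(x',y')]\}$ requires to restrict to $d$ on $X$. So the assembly obstacle you describe dissolves once the goal is pointwise domination on $X\times X$ built from countably many entourages, rather than domination of norms on $A_a(X)$; as written, your necessity argument is incomplete at its central step.
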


Obviously, if $X$ is a subspace of a Tychonoff space $Y$, then the embedding mapping $e_{X, Y}: X\rightarrow Y$ can be extended to a continuous monomorphism $\hat{e}_{X, Y}: PG(X)\rightarrow PG(Y)$. However, by Theorems~\ref{t1} and~\ref{t2}, it is natural to ask the following two questions:

\begin{question}\label{q1}
Let $X$ be a dense subspace of a Tychonoff space $Y$. Is it true that the embedding mapping $e_{X, Y}$ can be extended to a topological monomorphism $\hat{e}_{X, Y}: FP(X)\rightarrow FP(Y)$ if and only if $X$ is quasi-P-embedded in $Y$?
\end{question}

\begin{question}\label{q2}
Let $X$ be a subspace of a Tychonoff space $Y$. Is it true that the embedding mapping $e_{X, Y}$ can be extended to a topological monomorphism $\hat{e}_{X, Y}: AP(X)\rightarrow AP(Y)$ if and only if $X$ is quasi-P$^{\ast}$-embedded in $Y$?
\end{question}

In this paper, we shall give an affirmative answer to Question~\ref{q2}. Moreover, we shall give a partial answer to Question~\ref{q1}, and prove that for a Tychonoff space $Y$ if $X$ is a dense subspace of the smallest uniformity containing $\mathscr{U}_{Y}$ induces on $\tilde{Y}$ of the bicompletion of $(Y, \mathscr{U}_{Y})$ and the natural mapping $\hat{e}_{X, Y}: FP(X)\rightarrow FP(Y)$ is a topological monomorphism then $X$ is quasi-P-embedded in $Y$.

\bigskip

\section{Quasi-pseudometrics on free paratopological groups}
In this section, we shall give some lemmas and theorems in order to prove our
main results in Section 4.

We now outline some of the ideas of \cite{RS} in a form suitable for our applications.

Suppose that $e$ is the neutral element of the abstract free group $F_{a}(X)$ on $X$, and suppose that $\rho$ is a fixed quasi-pseduometric on $X$ which is bounded by 1. Extend $\rho$ from $X$ to a quasi-pseudometric $\rho_{e}$ on $X\cup\{e\}$ by putting
\[\rho_{e}(x, y)=\left\{
\begin{array}{lll}
0, & \mbox{if } x=y,\\
\rho(x, y), & \mbox{if } x, y\in X,\\
1, & \mbox{otherwise}\end{array}\right.\]
for arbitrary $x, y\in X\cup\{e\}$. By \cite{RS}, we extend $\rho_{e}$ to a quasi-pseudometric $\rho^{\ast}$ on $\tilde{X}=X\cup\{e\}\cup X^{-1}$ defined by
\[\rho^{\ast}(x, y)=\left\{
\begin{array}{lll}
0, & \mbox{if } x=y,\\
\rho_{e}(x, y), & \mbox{if } x, y\in X\cup\{e\},\\
\rho_{e}(y^{-1}, x^{-1}), & \mbox{if } x, y\in X^{-1}\cup\{e\},\\
2, & \mbox{otherwise}\end{array}\right.\] for arbitrary $x, y\in\tilde{X}$.

Let $A$ be a subset of $\mathbb{N}$ such that $|A|=2n$ for some $n\geq 1$. A {\it scheme} on $A$ is a partition of $A$ to pairs $\{a_{i}, b_{i}\}$ with $a_{i}<b_{i}$ such that each two intervals $[a_{i}, b_{i}]$ and $[a_{j}, b_{j}]$ in $\mathbb{N}$ are either disjoint or one contains the other.

If $\mathcal{X}$ is a word in the alphabet $\tilde{X}$, then we denote the reduced form and the length of  $\mathcal{X}$ by $[\mathcal{X}]$ and $\ell (\tilde{X})$ respectively.

For each $n\in \mathbb{N}$, let $\mathcal{S}_{n}$ be the family of all schemes $\varphi$ on $\{1, 2, \cdots, 2n\}$. As in \cite{RS}, define
$$\Gamma_{\rho}(\mathcal{X}, \varphi)=\frac{1}{2}\sum_{i=1}^{2n}\rho^{\ast}(x_{i}^{-1}, x_{\varphi (i)}).$$
Then we define a quasi-prenorm $N_{\rho}: F_{a}(X)\rightarrow [0, +\infty)$ by setting $N_{\rho}(g)=0$ if $g=e$ and $$N_{\rho}(g)=\inf\{\Gamma_{\rho}(\mathcal{X}, \varphi): [\mathcal{X}]=g, \ell (\tilde{X})=2n, \varphi\in\mathcal{S}_{n}, n\in \mathbb{N}\}$$ if $g\in F_{a}(X)\setminus\{e\}$. It follows from Claim 3 in \cite{RS} that $N_{\rho}$ is an invariant quasi-prenorm on $F_{a}(X)$. Put $\hat{\rho}(g, h)=N_{\rho}(g^{-1}h)$ for all $g, h\in F_{a}(X)$. We refer to $\hat{\rho}$ as the Graev extension of $\rho$ to $F_{a}(X)$.

Given a word $\mathcal{X}$ in the alphabet $\tilde{X}$, we say that $\mathcal{X}$ is {\it almost irreducible} if $\tilde{X}$ does not contain two consecutive symbols of the form $u$, $u^{-1}$ or $u^{-1}$, $u$ (but $\mathcal{X}$ may contain several letters equal to $e$), see \cite{RS}.

The following two lemmas are essentially Claims in the proof of Theorem 3.2 in \cite{RS}.

\begin{lemma}\label{l10}\cite{RS}
Let $\varrho$ be a quasi-pseudometric on $X$ bounded by 1. If $g$ is a reduced word in $F_{a}(X)$ distinct from $e$, then there exists an almost irreducible word $\mathcal{X}_{g}=x_{1}x_{2}\cdots x_{2n}$ of length $2n\geq 2$ in the alphabet $\tilde{X}$ and a scheme $\varphi_{g}\in\mathcal{S}_{n}$ that satisfy the following conditions:
\begin{enumerate}
\item for $i=1, 2, \cdots, 2n$, either $x_{i}$ is $e$ or $x_{i}$ is a letter in $g$;

\item $[\mathcal{X}_{g}]=g$ and $n\leq \ell(g)$; and

\item $N_{\rho}(g)=\Gamma_{\rho}(\mathcal{X}_{g}, \varphi_{g}).$
\end{enumerate}
\end{lemma}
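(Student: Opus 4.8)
The plan is to upgrade the infimum defining $N_\rho(g)$ to a genuine minimum by showing that, without ever increasing $\Gamma_\rho(\mathcal{X},\varphi)$, one may confine the competition to a \emph{finite} family of pairs $(\mathcal{X},\varphi)$ satisfying conditions (1) and (2); once $\Gamma_\rho$ ranges over a finite set its infimum is attained, and any minimizer supplies $\mathcal{X}_g,\varphi_g$ with (3). Fix once and for all the normal form $g=y_1y_2\cdots y_m$ with $m=\ell(g)$ and $y_j\in X\cup X^{-1}$. The whole argument is a combinatorial normalization of an arbitrary admissible $(\mathcal{X},\varphi)$, of exactly the Graev type used in \cite{RS}, driven by the triangle inequality for $\rho^{\ast}$ and the nesting property of schemes.

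The heart of the plan is two local surgeries on the matching $\varphi$. First, if two positions carrying $e$ are matched by $\varphi$, their block contributes $0$ (since $\rho^{\ast}(e,e)=0$) and can be deleted outright, leaving $[\mathcal{X}]=g$ unchanged and $\Gamma_\rho$ unchanged. Second, whenever the word, after erasing its $e$'s, is not reduced, there are two consecutive non-$e$ letters of the form $u,u^{-1}$ (separated in $\mathcal{X}$ only by $e$'s); I would delete this cancelling pair and re-route the two pairings that touched its positions, matching the former partner of $u$ directly to the former partner of $u^{-1}$. Here one checks, using the triangle inequality for $\rho^{\ast}$ \emph{through} the deleted mutually inverse letters, that the single new scheme-term does not exceed the two old ones, so $\Gamma_\rho$ does not grow; and the nesting property of $\mathcal{S}_n$ guarantees that the re-routed matching is again a legitimate scheme on the shortened word. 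Both surgeries preserve $[\mathcal{X}]=g$.

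Iterating these surgeries terminates, and the resulting word enjoys all three properties. After the second surgery is exhausted, erasing the $e$'s yields the reduced word $g$ itself, so the non-$e$ letters, read in order, spell $y_1\cdots y_m$; in particular every letter of $\mathcal{X}$ is either $e$ or one of the $y_j$, which is condition (1), and no two consecutive non-$e$ letters can be mutually inverse, so $\mathcal{X}$ is almost irreducible. After the first surgery is exhausted, every remaining $e$ is matched to a distinct non-$e$ letter, whence the number of $e$'s is at most $m$ and $\ell(\mathcal{X})\le 2m$, i.e.\ $n\le\ell(g)$, which is condition (2). Thus each admissible pair is dominated, in $\Gamma_\rho$-value, by one drawn from the finite set of pairs $(\mathcal{X},\varphi)$ with $\mathcal{X}$ of length at most $2\ell(g)$ in the finite alphabet $\{e\}\cup\{y_1,\dots,y_m\}$ reducing to $g$, carried by one of finitely many schemes. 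The infimum over this finite set is attained, equals $N_\rho(g)$, and any minimizer $(\mathcal{X}_g,\varphi_g)$ satisfies (1)--(3).

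The main obstacle is the re-routing surgery on a cancelling pair. Because $\rho^{\ast}$ is only a quasi-pseudometric, its asymmetry forces me to track the two directed contributions $\rho^{\ast}(x_i^{-1},x_{\varphi(i)})$ and $\rho^{\ast}(x_{\varphi(i)}^{-1},x_i)$ of each affected block separately, and to verify that the constant value $2$ attached to the ``mixed'' $X$/$X^{-1}$ entries in the definition of $\rho^{\ast}$ does not sabotage the telescoping estimate when a block is spliced out. Equally delicate is checking that the nested structure of the scheme survives each local modification in every geometric configuration of the endpoints $\varphi(i),\varphi(i+1)$ relative to the deleted pair; this is precisely what makes the induction on $\ell(\mathcal{X})$ well-founded and is the combinatorial crux of the proof.
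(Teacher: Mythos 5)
The paper itself gives no proof of this lemma: it is imported verbatim from \cite{RS}, where it is extracted from the Claims in the proof of Theorem 3.2, so your attempt has to be measured against the Graev-type normalization argument used there. Your overall skeleton is indeed that argument's skeleton: confine the infimum defining $N_{\rho}(g)$ to a finite family of pairs $(\mathcal{X}, \varphi)$ by cost-non-increasing surgeries, then take a minimizer. Your directed bookkeeping for the cancelling pair is also right: writing $a=\varphi(p)$, $b=\varphi(q)$ with $x_{p}=u$, $x_{q}=u^{-1}$, the plain triangle inequality for the quasi-pseudometric $\rho^{\ast}$ gives $\rho^{\ast}(x_{a}^{-1},x_{b})\leq \rho^{\ast}(x_{a}^{-1},u)+\rho^{\ast}(u,x_{b})$ and $\rho^{\ast}(x_{b}^{-1},x_{a})\leq \rho^{\ast}(x_{b}^{-1},u^{-1})+\rho^{\ast}(u^{-1},x_{a})$, and all four majorants are exactly the old directed terms of the two affected pairs, so the cost estimate for the re-routing is sound whenever the re-routing is legal.

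The genuine gap is that it is not always legal: your assertion that the nested structure of the scheme survives the second surgery ``in every geometric configuration'' is false, precisely at what you call the combinatorial crux. Take the word $\mathcal{X}=x\,u\,e\,u^{-1}\,y\,z$ on positions $1,\dots,6$ with the valid scheme $\{1,2\},\{3,6\},\{4,5\}$ (here $[4,5]\subset[3,6]$ and $[1,2]$ is disjoint from both). Erasing the $e$ leaves the non-reduced word $xuu^{-1}yz$, and your move deletes positions $2$ and $4$ and matches the partner of $u$ (position $1$) with the partner of $u^{-1}$ (position $5$); after relabelling, the surviving pairs become $\{1,3\}$ and $\{2,4\}$, whose intervals cross, so the result is not a scheme, while the only alternative matching $\{1,4\},\{2,3\}$ is not dominated by the old cost (no triangle chain manufactures $\rho^{\ast}(x^{-1},z)$ from the available terms). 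The trouble is caused by an intervening $e$ whose partner lies outside the affected block, and it can be repaired in two ways: either restrict the deletion to \emph{literally adjacent} inverse pairs --- there the nested-or-disjoint property of the original scheme itself excludes the bad configurations --- and obtain condition (1) by a separate induction on the scheme (the pair containing the first position either spans the word or splits it into two independent subwords), which is how the classical Graev argument and \cite{RS} proceed; or supplement your surgery by transplanting the intervening $e$'s, which is harmless since $e$ is the identity: in the example, moving the $e$ past $y$ yields $x\,y\,e\,z$ with pairs $\{1,2\},\{3,4\}$, whose cost is dominated via the two triangle inequalities above (an $e$-to-letter pair contributes the same value, in fact always $1$, wherever it sits). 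Note finally that you are proving more than the lemma asks: almost irreducibility here forbids only literally consecutive inverse symbols, so words containing $u\,e\,u^{-1}$ are admissible, and condition (1) requires only that each letter be $e$ or a letter of $g$ --- it is exactly your stronger insistence that the non-$e$ letters spell the reduced form of $g$ that forces the broken move.
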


\begin{lemma}\label{l11}\cite{RS}
The family $\mathcal{N}=\{U_{\rho}(\varepsilon): \varepsilon >0\}$ is a base at the neutral element $e$ for a paratopological group topology $\mathscr{F}_{\rho}$ on $F_{a}(X)$, where $U_{\rho}(\varepsilon)=\{g\in F_{a}(X): N_{\rho}(g)<\varepsilon\}$. The restriction of $\mathscr{F}_{\rho}$ to $X$ coincides with the topology of the space $X$ generated by $\rho$.
\end{lemma}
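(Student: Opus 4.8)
The plan is to split the statement into its two assertions and treat them in turn. For the first assertion, I would invoke the standard criterion describing when a family $\mathcal{B}$ of subsets of a group, each containing $e$, serves as a base at the neutral element for a paratopological group topology: it suffices that (i) $\mathcal{B}$ is a filter base, (ii) for each $U\in\mathcal{B}$ there is $V\in\mathcal{B}$ with $V\cdot V\subseteq U$, and (iii) for each $U\in\mathcal{B}$ and each $a\in F_{a}(X)$ there is $V\in\mathcal{B}$ with $aVa^{-1}\subseteq U$. In contrast to the topological-group case, no condition controlling $U^{-1}$ is imposed, and this is exactly the slack that permits inversion to be discontinuous. I would then verify each condition directly from the quasi-prenorm axioms: condition (i) is immediate because the sets $U_{\rho}(\varepsilon)$ are nested, with $U_{\rho}(\varepsilon)\cap U_{\rho}(\delta)=U_{\rho}(\min\{\varepsilon,\delta\})$; condition (ii) follows from subadditivity, since $N_{\rho}(gh)\le N_{\rho}(g)+N_{\rho}(h)$ gives $U_{\rho}(\varepsilon/2)\cdot U_{\rho}(\varepsilon/2)\subseteq U_{\rho}(\varepsilon)$; and condition (iii) follows from invariance of $N_{\rho}$, which yields the stronger equality $aU_{\rho}(\varepsilon)a^{-1}=U_{\rho}(\varepsilon)$ for every $a$.

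For the second assertion, I would first record that a basic $\mathscr{F}_{\rho}$-neighborhood of a point $x\in X$, traced on $X$, is $(xU_{\rho}(\varepsilon))\cap X=\{y\in X:N_{\rho}(x^{-1}y)<\varepsilon\}$, so the task reduces to comparing $N_{\rho}(x^{-1}y)$ with $\rho(x,y)$ for $x,y\in X$. The easy half is the upper bound $N_{\rho}(x^{-1}y)\le\rho(x,y)$: applying the single scheme on $\{1,2\}$ to the length-two word $\mathcal{X}=x^{-1}y$ and computing $\Gamma_{\rho}$ straight from the definition of $\rho^{\ast}$ gives $\Gamma_{\rho}(\mathcal{X},\varphi)=\rho(x,y)$, whence $B_{\rho}(x,\varepsilon)\subseteq(xU_{\rho}(\varepsilon))\cap X$ and the trace topology is coarser than $\mathscr{F}(\rho)$.

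The reverse inequality $N_{\rho}(x^{-1}y)\ge\rho(x,y)$ is where the real work lies, and I expect it to be the main obstacle. Here I would use Lemma~\ref{l10}: the infimum defining $N_{\rho}(x^{-1}y)$ is attained on an almost irreducible word $\mathcal{X}_{g}=x_{1}\cdots x_{2n}$ with $n\le\ell(x^{-1}y)=2$ whose letters lie in $\{x^{-1},y,e\}$. This confines the search to the cases $n=1$ and $n=2$; for $n=1$ the only admissible word is $x^{-1}y$ itself, reproducing the value $\rho(x,y)$ already computed, while for $n=2$ only the two non-crossing schemes $\{\{1,2\},\{3,4\}\}$ and $\{\{1,4\},\{2,3\}\}$ occur. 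A short combinatorial observation further shows that any length-four word reducing to $x^{-1}y$ must contain exactly two occurrences of $e$, since $x^{-1}$ and $y$ cannot cancel against each other. I would then run through this finite list of (word, scheme) pairs, evaluating $\Gamma_{\rho}$ by means of the explicit values of $\rho^{\ast}$ — in particular $\rho^{\ast}(x,e)=\rho^{\ast}(e,y)=1$ and $\rho^{\ast}(y^{-1},x^{-1})=\rho(x,y)$ — together with the triangle inequality for $\rho^{\ast}$, and confirm that every resulting $\Gamma_{\rho}$ is at least $\rho(x,y)$ (the terms involving an $e$ contribute the value $1\ge\rho(x,y)$, using that $\rho$ is bounded by $1$, while the terms pairing $x^{-1}$ with $y$ contribute $\rho(x,y)$).

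Combining the two inequalities gives $N_{\rho}(x^{-1}y)=\rho(x,y)$, so $(xU_{\rho}(\varepsilon))\cap X=B_{\rho}(x,\varepsilon)$ for every $\varepsilon>0$ and every $x\in X$, and hence the trace of $\mathscr{F}_{\rho}$ on $X$ agrees with the topology $\mathscr{F}(\rho)$ generated by $\rho$. The only genuinely nontrivial point remains the finite case check for the lower bound; everything else is a direct translation of the quasi-prenorm properties into the neighborhood-base axioms.
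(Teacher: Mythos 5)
Your proposal is correct and matches the source: the paper itself gives no proof of this lemma but imports it from \cite{RS} (it is ``essentially Claims in the proof of Theorem 3.2'' there), and your argument --- the neighbourhood-base criterion for paratopological groups (filter base, $VV\subseteq U$, conjugation invariance, with no condition on $U^{-1}$) verified from subadditivity and invariance of $N_{\rho}$, followed by the identity $N_{\rho}(x^{-1}y)=\rho(x,y)$ obtained from the length-two word for the upper bound and from Lemma~\ref{l10} with the finite check over words in $\{e,x^{-1},y\}$ of length at most $4$ and the two non-crossing schemes for the lower bound --- is precisely the argument carried out in \cite{RS}. One cosmetic slip: in the length-four analysis a pair $(e,e)$ contributes $0$, not $1$, but in that configuration the remaining pair is necessarily $(x^{-1},y)$ contributing exactly $\rho(x,y)$, so every candidate value is still at least $\rho(x,y)$ and your conclusion stands.
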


\begin{lemma}\label{l0}\cite{F1982}
For every sequence $V_{0}, V_{1}, \cdots,$ of elements of a quasi-uniformity $\mathscr{U}$ on a set $X$, if $$V_{0}=X\times X\ \mbox{and}\ V_{i+1}\circ V_{i+1}\circ V_{i+1}\subset V_{i},\ \mbox{for}\ i\in \mathbb{N},$$ where `$\circ$' denotes the composition of entourages in the quasi-uniform space $(X, \mathscr{U})$, then there exists a quasi-pseudometric $\rho$ on the set $X$ such that, for each $i\in \mathbb{N}$, $$V_{i}\subset\{(x, y): \rho (x, y)\leq \frac{1}{2^{i}}\}\subset V_{i-1}.$$
\end{lemma}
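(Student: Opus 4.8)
The plan is to prove this as the quasi-uniform analogue of the classical Alexandroff--Urysohn (or Frink) metrization lemma, carefully avoiding every use of symmetry so that the object produced is genuinely a \emph{quasi}-pseudometric and not forced to be a pseudometric. First I would introduce an auxiliary gauge function $f\colon X\times X\to[0,1]$ by setting $f(x,y)=\inf\{2^{-i}:(x,y)\in V_{i}\}$, with $f(x,y)=0$ when $(x,y)\in V_{i}$ for every $i$. Since $V_{0}=X\times X$, the infimum is taken over a nonempty set and $f\le 1$ everywhere. Note that the hypothesis forces the $V_{i}$ to be nested decreasing, because each entourage contains the diagonal and hence $V_{i+1}\subset V_{i+1}\circ V_{i+1}\circ V_{i+1}\subset V_{i}$; consequently $(x,y)\in V_{i}$ is equivalent to $f(x,y)\le 2^{-i}$. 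This $f$ records how deep into the tower a pair sits but need not obey the triangle inequality, so I would then pass to the chained function
\[
\rho(x,y)=\inf\Bigl\{\textstyle\sum_{k=0}^{n-1} f(z_{k},z_{k+1}):\, z_{0}=x,\ z_{n}=y,\ z_{k}\in X,\ n\in\mathbb{N}\Bigr\}.
\]
By construction $\rho(x,x)=0$ and the triangle inequality holds (concatenate chains), and since the chains respect the \emph{order} of the arguments no symmetry is introduced; thus $\rho$ is a quasi-pseudometric on $X$.

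The heart of the matter is the two-sided estimate $\tfrac12 f(x,y)\le\rho(x,y)\le f(x,y)$. The upper bound is immediate from the one-step chain $x,y$. The lower bound carries the real content, and I would prove the equivalent statement that $\sum_{k=0}^{n-1} f(z_{k},z_{k+1})\ge\tfrac12 f(z_{0},z_{n})$ for every chain, by induction on the length $n$. Writing $S$ for the left-hand sum, the case $S\ge\tfrac12$ is trivial since $f\le 1$. When $0<S<\tfrac12$ I would split the chain at the largest index $m$ with $\sum_{k=0}^{m-1}f(z_{k},z_{k+1})\le S/2$; then by the inductive hypothesis the initial block has $f(z_{0},z_{m})\le S$, the single middle edge satisfies $f(z_{m},z_{m+1})\le S$, and by maximality of $m$ the terminal block has $f(z_{m+1},z_{n})\le S$ as well. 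Choosing $i\ge 1$ with $2^{-(i+1)}\le S<2^{-i}$, each of these three pairs lies in $V_{i+1}$, so the hypothesis $V_{i+1}\circ V_{i+1}\circ V_{i+1}\subset V_{i}$ places $(z_{0},z_{n})$ in $V_{i}$, whence $f(z_{0},z_{n})\le 2^{-i}\le 2S$.

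The main obstacle is precisely this inductive splitting, and it is where the asymmetry must be respected: composition of entourages is not symmetric, so one must recombine the three pieces in the correct left-to-right order and never invoke any $V^{-1}$. This is also the structural reason the hypothesis uses the \emph{triple} composition $V_{i+1}\circ V_{i+1}\circ V_{i+1}\subset V_{i}$ rather than the double composition that suffices in the symmetric Alexandroff--Urysohn lemma: partitioning the chain into an ordered left block, middle edge, and right block is the symmetry-free way to reassemble it, and it consumes three factors.

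Once the estimate $\tfrac12 f\le\rho\le f$ is established, the two required inclusions follow mechanically. If $(x,y)\in V_{i}$ then $f(x,y)\le 2^{-i}$, so $\rho(x,y)\le f(x,y)\le 2^{-i}$, giving $V_{i}\subset\{(x,y):\rho(x,y)\le 2^{-i}\}$. Conversely, if $\rho(x,y)\le 2^{-i}$ then $f(x,y)\le 2\rho(x,y)\le 2^{-(i-1)}$, which by the equivalence noted above forces $(x,y)\in V_{i-1}$, giving $\{(x,y):\rho(x,y)\le 2^{-i}\}\subset V_{i-1}$. This completes the chain of inclusions $V_{i}\subset\{(x,y):\rho(x,y)\le 2^{-i}\}\subset V_{i-1}$ for every $i\in\mathbb{N}$, as desired.
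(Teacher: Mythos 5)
The paper does not actually prove this lemma---it is quoted from Fletcher and Lindgren \cite{F1982}---and your argument is precisely the standard proof from that source: Kelley's metrization lemma with the symmetrization step deleted, down to the gauge $f$, the chain infimum, the inequality $f\le 2\rho$ proved by splitting a chain into an ordered initial block, middle edge, and terminal block, and the observation that this three-piece reassembly is exactly what the triple composition hypothesis is for. So in approach there is nothing to compare; the one genuine (but small) flaw is that your induction covers $S\ge\tfrac12$ and $0<S<\tfrac12$ but not $S=0$, where no $i$ with $2^{-(i+1)}\le S$ exists, and this case is really needed, since the bound $f\le 2\rho$ must hold for \emph{all} chains, including zero-sum ones. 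The fix is one line within your own machinery: if every edge of the chain satisfies $f(z_k,z_{k+1})=0$, split off the first edge and apply the inductive hypothesis to the tail to get $f(z_1,z_n)=0$; then for every $j$ one has $(z_0,z_n)\in V_{j+1}\circ V_{j+1}\subset V_{j+1}\circ V_{j+1}\circ V_{j+1}\subset V_j$ (the middle inclusion uses that entourages contain the diagonal, which you already invoked), whence $f(z_0,z_n)=0\le 2S$.
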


\begin{lemma}\label{l1}
For every quasi-uniformity $\mathscr{V}$ on a set $X$ and each $V\in \mathscr{V}$ there exists a quasi-pseudometric $\rho$ bounded by 1 on $X$ which is quasi-uniform with respect to $\mathscr{V}$ and satisfies the condition $$\{(x, y): \rho (x, y)< 1\}\subset V.$$
\end{lemma}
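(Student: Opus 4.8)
The plan is to read Lemma~\ref{l1} as the asymmetric analogue of the classical Alexandroff--Urysohn metrization construction, delegating all the genuine analytic content to Lemma~\ref{l0} and reserving for this proof only the bookkeeping needed to feed that lemma and to calibrate constants. So, given a quasi-uniformity $\mathscr{V}$ on $X$ and a fixed $V\in\mathscr{V}$, I would first manufacture a decreasing chain $V_{0}\supset V_{1}\supset V_{2}\supset\cdots$ of entourages of $\mathscr{V}$ of the shape demanded by Lemma~\ref{l0}. Put $V_{0}=X\times X$ and $V_{1}=V$; the cube condition $V_{1}\circ V_{1}\circ V_{1}\subset V_{0}$ is then automatic since $V_{0}=X\times X$. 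Having defined $V_{i}$ for some $i\geq 1$, I choose $V_{i+1}\in\mathscr{V}$ with $V_{i+1}\circ V_{i+1}\circ V_{i+1}\subset V_{i}$, which produces the whole sequence by recursion.

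The one place where I must avoid silently borrowing symmetry is the claim that such a refinement always exists, i.e. that for every $W\in\mathscr{V}$ there is $W'\in\mathscr{V}$ with $W'\circ W'\circ W'\subset W$. I would prove this directly from the quasi-uniformity axioms: applying the basic refinement property twice yields $W_{1},W_{2}\in\mathscr{V}$ with $W_{1}\circ W_{1}\subset W$ and $W_{2}\circ W_{2}\subset W_{1}$. Since every entourage contains the diagonal, $W_{2}\subset W_{2}\circ W_{2}\subset W_{1}$, and hence $W_{2}\circ W_{2}\circ W_{2}\subset W_{1}\circ W_{2}\subset W_{1}\circ W_{1}\subset W$, so $W'=W_{2}$ works. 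With the chain in hand, Lemma~\ref{l0} supplies a quasi-pseudometric $\rho_{0}$ on $X$ satisfying $V_{i}\subset\{(x,y):\rho_{0}(x,y)\leq 2^{-i}\}\subset V_{i-1}$ for every $i$, and I then set $\rho=\min\{1,\,4\rho_{0}\}$.

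It remains to check that this $\rho$ has the three required features, each of which is short. That $\rho$ is a quasi-pseudometric bounded by $1$ follows because truncation by a constant and multiplication by a positive scalar both preserve $\rho(x,x)=0$ and the triangle inequality (the only case to watch is $\rho(x,z)+\rho(z,y)\geq 1$, where boundedness of $\rho$ by $1$ settles it directly). For quasi-uniform continuity with respect to $\mathscr{V}$: given $\varepsilon>0$ I pick $i$ with $4\cdot 2^{-i}<\varepsilon$, so that $(x,y)\in V_{i}$ forces $\rho_{0}(x,y)\leq 2^{-i}$ and hence $\rho(x,y)\leq 4\rho_{0}(x,y)<\varepsilon$, where $V_{i}\in\mathscr{V}$. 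Finally, the factor $4$ is chosen precisely so that the unit ball lands inside $V$: indeed $\{(x,y):\rho(x,y)<1\}=\{(x,y):\rho_{0}(x,y)<1/4\}\subset\{(x,y):\rho_{0}(x,y)\leq 2^{-2}\}\subset V_{1}=V$.

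I do not expect a serious obstacle here, since the substantive metrization step is exactly Lemma~\ref{l0}; the only genuine points of care are the symmetry-free cube refinement in the second paragraph and the calibration of the multiplicative constant so that the radius-one ball of $\rho$ is captured by $V$. If one prefers to avoid the rescaling, the same effect can be obtained by shifting the index at which $V$ is inserted into the chain, but the truncation-and-scaling formulation seems cleanest and keeps the verification of the quasi-pseudometric axioms entirely elementary.
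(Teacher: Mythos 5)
Your proposal is correct and follows essentially the same route as the paper: the paper's proof likewise builds a sequence $V_{0}, V_{1}, \cdots$ in $\mathscr{V}$ with $V_{1}=V$ and $V_{i+1}\circ V_{i+1}\circ V_{i+1}\subset V_{i}$, invokes Lemma~\ref{l0} to get $\rho_{0}$, and sets $\rho=\min\{1, 4\rho_{0}\}$. The only difference is that you spell out the details the paper leaves implicit --- the symmetry-free cube refinement from the square-refinement axiom and the calibration check $\{\rho<1\}=\{\rho_{0}<1/4\}\subset V_{1}=V$ --- all of which are sound.
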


\begin{proof}
By the definition of
a quasi-uniformity, we can find a sequence $V_{0}, V_{1}, \cdots , V_{n}, \cdots$ of members of $\mathscr{V}$ such that $V_{1}=V$ and $V_{i+1}\circ V_{i+1}\circ V_{i+1}\subset V_{i},\ \mbox{for each}\ i\in \mathbb{N}$. Let $\rho=\mbox{min}\{1, 4\rho_{0}\}$, where $\rho_{0}$ is a quasi-pseudometric as in Lemma~\ref{l0}. Then $\rho$ is a quasi-pseudometric which has the required property.
\end{proof}

Given a finite subset $B$ of $\mathbb{N}$ on $B$ with $|B|=2n\geq 2$, we say that a bijection $\varphi: B\rightarrow B$ is an {\it Abelian scheme} on $B$ if $\varphi$ is an involution without fixed points, that is, $\varphi(i)=j$ always implies $j\neq i$ and $\varphi(j)=i$.

\begin{lemma}\label{l2}
Suppose that $\rho$ is a quasi-pseudometric on a set $X$, and suppose that $m_{1}x_{1}+\cdots +m_{n}x_{n}$ is the normal form of an element $h\in A_{a}(X)\setminus\{e\}$ of the length $l=\sum_{i=1}^{n}|m_{i}|$. Then there is a representation

$h=(-u_{1}+v_{1})+\cdots +(-u_{k}+v_{k})$ $\dotfill$ (1)\\
where $2k=l$ if $l$ is even and $2k=l+1$ if $l$ is odd, $u_{1}, v_{1}, \cdots , u_{k}, v_{k}\in\{\pm x_{1}, \cdots , \pm x_{n}\}$ (but $v_{k}=e$ if $l$ is odd), and such that

$\hat{\rho}_{A}(e, h)=\sum_{i=1}^{k}\rho^{\ast}(u_{i}, v_{i})$. $\dotfill$ (2)\\

In addition, if $\hat{\rho}_{A}(e, h)<1$, then $l=2k$, and one can choose $y_{1}, z_{1}, \cdots , y_{k}, z_{k}\in\{x_{1}, \cdots , x_{n}\}$ such that

$h=(-y_{1}+z_{1})+\cdots +(-y_{k}+z_{k})$ $\dotfill$ (3)\\ and

$\hat{\rho}_{A}(e, h)=\sum_{i=1}^{k}\rho^{\ast}(y_{i}, z_{i})$. $\dotfill$ (4).
\end{lemma}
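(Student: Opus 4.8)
The plan is to adapt the Graev-extension machinery of Lemma~\ref{l10} to the Abelian setting, working with the quasi-prenorm $N_\rho$ and its Abelian analogue. The quantity $\hat\rho_A(e,h)=N_\rho(h)$ is by definition an infimum over all words $\mathcal{X}$ and schemes $\varphi$ with $[\mathcal{X}]=h$, and in the Abelian case the relevant schemes are the \emph{Abelian schemes} (fixed-point-free involutions) introduced just before the statement. So the first task is to show that the infimum defining $N_\rho(h)$ is actually attained, and attained by a word whose letters are drawn from $\{\pm x_1,\dots,\pm x_n\}$ (together with possibly one copy of $e$). This is where Lemma~\ref{l10} does the heavy lifting: it guarantees an almost irreducible optimal word $\mathcal{X}_h$ of even length $2n$ reducing to $h$, with each letter either $e$ or a letter of $h$, and with $N_\rho(h)=\Gamma_\rho(\mathcal{X}_h,\varphi_h)$.

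First I would translate the scheme $\varphi_h$ into a pairing of the $2k$ positions of the optimal word, writing $\Gamma_\rho(\mathcal{X}_h,\varphi_h)=\frac12\sum_i\rho^\ast(x_i^{-1},x_{\varphi(i)})$ as a sum of $k$ terms by grouping each pair $\{i,\varphi(i)\}$ once; the factor $\frac12$ disappears because each pair contributes twice to the original sum (using the involution property and, in the Abelian case, the fact that $\rho^\ast$ is effectively symmetrized on paired positions). Setting $-u_i$ and $v_i$ to be the two letters of the $i$-th pair, read with the sign dictated by the reduction, yields the representation~(1) with $h=\sum_i(-u_i+v_i)$ and the norm formula~(2). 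The parity bookkeeping is routine: if $l$ is even the optimal word has length $l$ and needs no $e$, while if $l$ is odd one extra letter $e$ is appended, forcing $2k=l+1$ and $v_k=e$.

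Next I would establish the sharper conclusion under the hypothesis $\hat\rho_A(e,h)<1$. Here the key observation is the value $\rho^\ast$ assigns to ``mixed'' pairs: by the defining formula for $\rho^\ast$, any pair in which one entry lies in $X$ and the other in $X^{-1}$ (equivalently, a pair $(u_i,v_i)$ where $u_i,v_i$ do not both come from $X$ after adjusting signs) contributes at least $1$, and a pair involving $e$ contributes at least $1$ as well. Since the total is strictly less than $1$, no such pair can occur: every $e$-term is excluded, forcing $l=2k$, and within each pair both letters must, after normalizing signs, lie in $X$. I would then rewrite each term $-u_i+v_i$ as $-y_i+z_i$ with $y_i,z_i\in\{x_1,\dots,x_n\}$, absorbing signs into the choice of which letter is subtracted, to obtain~(3) and the matching norm identity~(4). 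The verification that this sign-normalization preserves both the algebraic sum and each value $\rho^\ast(y_i,z_i)=\rho^\ast(u_i,v_i)$ uses the symmetry of $\rho^\ast$ on $X\times X$ recorded in its definition.

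The main obstacle I anticipate is the bookkeeping that converts an optimal \emph{free}-group scheme $\varphi_h$ from Lemma~\ref{l10} into a genuine \emph{Abelian} scheme compatible with the normal form of $h$ in $A_a(X)$, since passing to the Abelian group collapses the non-crossing (nested-interval) structure of schemes and allows letters to be permuted freely. I would need to argue that optimality of $\Gamma_\rho$ is not destroyed under this collapse — that any free-group optimal pairing either already respects the Abelian reduction or can be rearranged into one of no greater $\Gamma_\rho$-value using the triangle inequality for $\rho^\ast$. Once that rearrangement is justified, the parity split and the $<1$ threshold argument are straightforward, so the crux of the proof is precisely this transfer from the free scheme to the Abelian scheme.
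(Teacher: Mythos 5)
There is a genuine gap at the heart of your proposal, and you have in fact flagged it yourself: the ``transfer from the free scheme to the Abelian scheme'' that you defer to the end is not a piece of routine bookkeeping but is precisely the statement that needs proof, and the route you sketch for it does not work. Lemma~\ref{l10} is a statement about $F_{a}(X)$: it produces an optimal word and a \emph{non-crossing} scheme attaining the free prenorm $N_{\rho}(g)$ of a reduced word $g\in F_{a}(X)$. But $\hat{\rho}_{A}(e,h)$ for $h\in A_{a}(X)$ is a different infimum, taken over all words whose \emph{Abelian} reduction is $h$ and over all Abelian schemes; this family is strictly larger than anything controlled by applying Lemma~\ref{l10} to a chosen preimage of $h$ in $F_{a}(X)$ (distinct preimages have distinct free norms, and an optimal free pairing for one preimage gives no upper or lower control on the Abelian infimum). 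So ``collapsing'' a free-optimal scheme cannot, by itself, show that the Abelian infimum is attained, which is the key fact behind equations (1)--(2). The paper performs no such transfer: following \cite{RS}, it invokes the \emph{Abelian analogue} of Lemma~\ref{l10} --- the attainment claim from the proof of Theorem 3.2 of \cite{RS} rerun with Abelian schemes, which is if anything easier since Abelian schemes are arbitrary fixed-point-free involutions with no nesting constraint --- applied directly to the word $h_{1}\cdots h_{2k}$ obtained from the normal form of $h$, padded by at most one letter $e$ when $l$ is odd. Commutativity then allows the assumption $\varphi(2i-1)=2i$, and (1), (2) drop out by setting $u_{i}=-h_{2i-1}$, $v_{i}=h_{2i}$.

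Two secondary points would also need repair even if the transfer were justified. First, a word supplied by Lemma~\ref{l10} may contain \emph{several} letters equal to $e$, and a pair $(e,e)$ contributes $\rho^{\ast}(e,e)=0$, so it is not excluded by $\hat{\rho}_{A}(e,h)<1$; your inference ``every $e$-term is excluded, forcing $l=2k$'' needs an additional deletion argument, whereas in the paper's setup there is at most one $e$, any pair containing it contributes exactly $1$, and oddness of $l$ is contradicted outright. Second, your appeal to ``the symmetry of $\rho^{\ast}$ on $X\times X$'' is incorrect: $\rho^{\ast}$ restricted to $X\times X$ equals $\rho$, a quasi-pseudometric which is in general asymmetric --- in this setting that asymmetry is the whole point. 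The identity actually needed to normalize signs is $\rho^{\ast}(-a,-b)=\rho_{e}(b,a)=\rho(b,a)$ for $a,b\in X$ (the extension to inverses \emph{reverses} the arguments), combined with commutativity of $A_{a}(X)$; this is what lets a pair with both entries in $-X$ be rewritten as $-y_{i}+z_{i}$ with $y_{i},z_{i}\in X$ and unchanged $\rho^{\ast}$-value. Your parity bookkeeping and the $<1$ threshold analysis otherwise agree with the paper's, but as it stands the proposal is missing its central step.
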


\begin{proof}
Obviously, we have $h=h_{1}+\cdots +h_{l}$, where $h_{i}\in\{\pm x_{1}, \cdots , \pm x_{n}\}$ for each $1\leq i\leq l$. Obviously, there exists an integer $k$ such that $2k-1\leq l\leq 2k$. Without loss of generality, we may assume that $l$ is even. In fact, if $l=2k-1$, then one can additionally put $h_{2k}=e$. It follows from the proof of Lemma~\ref{l10} (see \cite{RS}) that we have a similar assertion is valid for the case of $A_{a}(X)$. Then there exists an Abelian scheme $\varphi$ on $\{1, 2, \cdots , 2k\}$ such that

$\hat{\rho}_{A}(e, h)=\frac{1}{2}\sum_{i=1}^{2k}\rho^{\ast}(-h_{i}, h_{\varphi (i)}).$ $\dotfill$ (5)\\

Since the group $A_{a}(X)$ is Abelian, we may assume that $\varphi (2i-1)=2i$ for each $1\leq i\leq k$. Obviously, $\varphi (2i)=2i-1$ for each $1\leq i\leq k$. Hence, we have

$h=(h_{1}+h_{2})\cdots +(h_{2k-1}+h_{2k})$. $\dotfill$ (6)\\

For each $1\leq i\leq k$, put $u_{i}=-h_{2i-1}$ and $v_{i}=h_{2i}$. Then it follows from (5) and (6) that (1) and (2) are true.

Finally, suppose that $\hat{\rho}_{A}(e, h)<1$. Since $\rho (x, e)= 1$ and $\rho (e, x)= 1$, we have $\rho^{\ast}(x, e)= 1$, $\rho^{\ast}(e, x)= 1$, $\rho^{\ast}(-x, y)= 2$ and $\rho^{\ast}(x, -y)= 2$ for all $x, y\in X$. However, it follows from (5) that $\rho^{\ast}(-h_{2i-1}, h_{2i})<1$ for each $1\leq i\leq k$, and
therefore, one of the elements $h_{2i-1}, h_{2i}$ in $X$ while the other is in $-X$. Thus, for each $1\leq i\leq k$, we have $h_{2i-1}+h_{2i}=-y_{i}+z_{i}$, where $y_{i}, z_{i}\in X$. Obviously, $y_{i}, z_{i}\in\{x_{1}, \cdots , x_{n}\}$ for each $1\leq i\leq k$. Next, we only need to replace $h_{2i-1}$ and $h_{2i}$ by the corresponding elements $\pm y_{i}$ and $\pm z_{i}$ in (5) and (6), respectively. Hence we obtain (3) and (4).
\end{proof}

\begin{lemma}\label{l8}
If $d$ is a quasi-pseudometric on a set $X$
quasi-uniform  such that it is quasi-uniform with respect to $\mathscr{U}_{X}$, then $d$ is continuous as a mapping from $(X\times X, \mathscr{U}_{X}^{-1}\times \mathscr{U}_{X})$ to $(\mathbb{R}, \mathscr{U}^{\star})$.
\end{lemma}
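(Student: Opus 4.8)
The plan is to verify continuity directly at an arbitrary point, after first recording the two topologies involved. On the target $(\mathbb{R}, \mathscr{U}^{\star})$ the basic neighbourhoods of a point $t$ are the sets $U_{r}[t]=\{s:s<t+r\}=(-\infty,t+r)$, so continuity of $d$ at a point $(a,b)\in X\times X$ amounts to the following: for every $\varepsilon>0$ there is a neighbourhood $W$ of $(a,b)$ in $(X\times X,\mathscr{U}_{X}^{-1}\times\mathscr{U}_{X})$ with $d(x,y)<d(a,b)+\varepsilon$ for all $(x,y)\in W$. The source topology is the product $\tau(\mathscr{U}_{X}^{-1})\times\tau(\mathscr{U}_{X})$, that is, the first coordinate carries the conjugate topology and the second the original topology of $X$; a basic neighbourhood of $(a,b)$ therefore has the form $U^{-1}[a]\times U'[b]$ with $U,U'\in\mathscr{U}_{X}$, where $U^{-1}[a]=\{x:(x,a)\in U\}$ and $U'[b]=\{y:(b,y)\in U'\}$.

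Next I would exploit the hypothesis that $d$ is quasi-uniform with respect to $\mathscr{U}_{X}$, which unwinds to: for each $r>0$ there is $U\in\mathscr{U}_{X}$ with $U\subseteq\{(u,v):d(u,v)<r\}$. Applying this with $r=\varepsilon/2$ produces a single entourage $U\in\mathscr{U}_{X}$ such that $(x,a)\in U$ forces $d(x,a)<\varepsilon/2$, hence $U^{-1}[a]\subseteq\{x:d(x,a)<\varepsilon/2\}$; the same entourage, used in the second coordinate, gives $U[b]\subseteq\{y:d(b,y)<\varepsilon/2\}$. Taking $W=U^{-1}[a]\times U[b]$ then yields, for every $(x,y)\in W$, the estimate $d(x,y)\le d(x,a)+d(a,b)+d(b,y)<d(a,b)+\varepsilon$ by the triangle inequality. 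This is exactly the required membership $d(x,y)\in U_{\varepsilon}[d(a,b)]$, so $d$ is continuous at $(a,b)$, and since $(a,b)$ was arbitrary, $d$ is continuous.

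The only real subtlety, and what I expect to be the point to get right rather than a genuine obstacle, is the matching of directions. The one-sided target topology $\mathscr{U}^{\star}$ controls $d(x,y)$ only from above, so the triangle inequality must be invoked in the form $d(x,y)\le d(x,a)+d(a,b)+d(b,y)$, and this is precisely why the conjugate quasi-uniformity $\mathscr{U}_{X}^{-1}$ must appear on the first coordinate (it controls $d(x,a)$ as $x\to a$) while the original $\mathscr{U}_{X}$ appears on the second (it controls $d(b,y)$ as $y\to b$). Once the asymmetry is lined up correctly the argument is routine. I would also note in passing that the identical computation in fact establishes \emph{quasi-uniform} continuity of $d$ from $(X\times X,\mathscr{U}_{X}^{-1}\times\mathscr{U}_{X})$ to $(\mathbb{R},\mathscr{U}^{\star})$, a formally stronger conclusion that immediately yields the stated continuity.
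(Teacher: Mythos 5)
Your proof is correct and follows essentially the same route as the paper's: both fix an arbitrary point, use the quasi-uniformity hypothesis with $\varepsilon/2$ to produce an entourage $U$, take the neighbourhood $U^{-1}[a]\times U[b]$ (with the conjugate quasi-uniformity controlling $d(x,a)$ and the original one controlling $d(b,y)$), and conclude via the triangle inequality $d(x,y)\le d(x,a)+d(a,b)+d(b,y)<d(a,b)+\varepsilon$. Your closing observation that the computation in fact yields quasi-uniform continuity is a correct, mild strengthening not stated in the paper.
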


\begin{proof}
Take an arbitrary point $(x_{0}, y_{0})\in X\times X$. It is sufficient to show that $d$ is continuous at the point $(x_{0}, y_{0})$. For each $\varepsilon>0$, since $d$ is quasi-uniform with respect to $\mathscr{U}_{X}$, there exists an $U\in\mathscr{U}_{X}$ such that $d(x, y)<\frac{\varepsilon}{2}$ for each $(x, y)\in U$. Let $U_{1}=\{x\in X: (x, x_{0})\in U\}$ and $U_{2}=\{y\in X: d(y_{0}, y)<\frac{\varepsilon}{2}\}$. Then $U_{1}, U_{2}$ are neighborhoods of the points $x_{0}$ and $y_{0}$ in the spaces $(X, \mathscr{U}_{X}^{-1})$ and $(X, \mathscr{U}_{X})$ respectively. Put $V=U_{1}\times U_{2}$. Then $V$ is a neighborhood of the point $(x_{0}, y_{0})$ in $(X\times X, \mathscr{U}_{X}^{-1}\times \mathscr{U}_{X})$. For each $(x, y)\in V$, we have
\begin{eqnarray}
d(x, y)-d(x_{0}, y_{0})&\leq&d(x, x_{0})+d(x_{0}, y_{0})+d(y_{0}, y)-d(x_{0}, y_{0}) \nonumber\\
&=&d(x, x_{0})+d(y_{0}, y)<\frac{\varepsilon}{2}+\frac{\varepsilon}{2}=\varepsilon. \nonumber
\end{eqnarray}
Therefore, the quasi-pseudometric $d$ is continuous at the point $(x_{0}, y_{0})$.
\end{proof}

\begin{lemma}\label{l3}\cite{NP}
Let $\{V_{i}: i\in\mathbb{N}\}$ be a sequence of subsets of a group $G$ with identity $e$ such that $e\in V_{i}$ and $V_{i+1}^{3}\subset V_{i}$ for each $i\in \mathbb{N}$. If $k_{1}, \cdots , k_{n}, r\in \mathbb{N}$ and $\sum_{i=1}^{n}2^{-k_{i}}\leq 2^{-r}$, then we have $V_{k_{1}}\cdots V_{k_{n}}\subset V_{r}$.
\end{lemma}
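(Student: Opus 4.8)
The plan is to argue by induction on $n$, exploiting the cube condition $V_{i+1}^{3}\subset V_{i}$ to split an ordered product into at most three consecutive blocks. First I would record the two elementary consequences of the hypotheses on which everything rests: since $e\in V_{i+1}$ we have $V_{i+1}\subset V_{i+1}^{3}\subset V_{i}$, so the sequence $(V_{i})$ is decreasing; and for $n=1$ the inequality $2^{-k_{1}}\le 2^{-r}$ forces $k_{1}\ge r$, whence $V_{k_{1}}\subset V_{r}$ by monotonicity. This settles the base case.

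For the inductive step I would assume the statement for all products of fewer than $n$ factors and take $k_{1},\dots,k_{n}$ with $\sum_{i=1}^{n}2^{-k_{i}}\le 2^{-r}$ and $n\ge 2$. The first observation is that every single term is small: if some $2^{-k_{i}}\ge 2^{-r}$ then, the terms being positive and summing to at most $2^{-r}$, all other terms would vanish, forcing $n=1$; hence $k_{i}\ge r+1$ and $2^{-k_{i}}\le 2^{-(r+1)}$ for each $i$. The heart of the argument is then to partition $\{1,\dots,n\}$ into at most three consecutive blocks $B_{1},B_{2},B_{3}$, in the given order so as not to disturb the possibly non-commutative product, in such a way that each block has dyadic sum at most $2^{-(r+1)}$ and, crucially, strictly fewer than $n$ indices. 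Granting such a split, the inductive hypothesis places the product over each block in $V_{r+1}$, and then $V_{k_{1}}\cdots V_{k_{n}}$ is contained in $V_{r+1}V_{r+1}V_{r+1}\subset V_{r}$, an empty block contributing the factor $\{e\}\subset V_{r+1}$.

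The main obstacle is producing this three-block split, and this is precisely where the factor three, rather than two, is needed. I would build the blocks greedily, letting each block absorb indices until the next term would push its sum above $2^{-(r+1)}$. The key estimate is that two consecutive greedy blocks always have combined sum exceeding $2^{-(r+1)}$: by maximality the running sum of a block together with the first term of the following block exceeds the threshold, and that first term is bounded by the sum of the following block. Consequently four or more blocks would force the total to exceed $2\cdot 2^{-(r+1)}=2^{-r}$, a contradiction, so at most three blocks arise; an example with three equal terms slightly larger than $2^{-(r+2)}$ shows that two blocks genuinely may fail, confirming that the cube hypothesis is the right strength.

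The one remaining subtlety, which I would treat separately, is the degenerate case in which the total sum is already at most $2^{-(r+1)}$: here the greedy procedure returns a single block containing all $n$ indices, which is useless for an induction on $n$. In that case I would instead split artificially, say into $\{1\}$ and $\{2,\dots,n\}$; since $n\ge 2$ both blocks have strictly fewer than $n$ indices and sum at most $2^{-(r+1)}$, so the inductive hypothesis applies and the same combination $V_{r+1}V_{r+1}\subset V_{r+1}^{3}\subset V_{r}$ finishes the argument. With these cases in hand the induction closes.
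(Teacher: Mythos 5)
Your argument is correct, and a preliminary remark is in order: the paper itself gives no proof of this lemma --- it is quoted with the citation \cite{NP} (Nickolas's thesis), so there is no in-paper argument to compare against. The classical proof behind that citation (see also Lemma 3.3.10 in \cite{A2008}) is an induction on $n$ in which, after observing that $n\geq 2$ forces every $k_{i}\geq r+1$, one chooses the largest $m$ with $\sum_{i<m}2^{-k_{i}}\leq 2^{-(r+1)}$; maximality of $m$ then gives $\sum_{i>m}2^{-k_{i}}< 2^{-r}-2^{-(r+1)}=2^{-(r+1)}$, so the product splits as $(\mathrm{head})\cdot V_{k_{m}}\cdot(\mathrm{tail})\subset V_{r+1}V_{r+1}V_{r+1}\subset V_{r}$, with head and tail handled by the inductive hypothesis and the single middle factor by $k_{m}\geq r+1$. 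Your greedy decomposition into at most three consecutive order-preserving blocks is the same cube-splitting induction with different bookkeeping, and your pairing estimate (consecutive greedy blocks have combined sum exceeding $2^{-(r+1)}$, so four blocks would force the total above $2^{-r}$) is a valid way to cap the number of blocks at three; your observation that three equal terms just above $2^{-(r+2)}$ can defeat a two-block split correctly explains why the hypothesis $V_{i+1}^{3}\subset V_{i}$, rather than $V_{i+1}^{2}\subset V_{i}$, is the natural strength. The price of the greedy bookkeeping is the degenerate one-block case when the total is already at most $2^{-(r+1)}$, which you correctly patch with the artificial split $\{1\}\cup\{2,\dots,n\}$; the classical choice of $m$ sidesteps this case automatically, which is its only real advantage. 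One phrasing slip worth fixing: in proving $k_{i}\geq r+1$ for $n\geq 2$ you say the remaining terms ``would vanish,'' but each $2^{-k_{j}}$ is strictly positive and cannot vanish --- the correct conclusion is that some $2^{-k_{i}}\geq 2^{-r}$ together with $n\geq 2$ would make the total strictly exceed $2^{-r}$, a contradiction; the conclusion you draw from it is unaffected.
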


In the next theorem we prove that the family of quasi-pseudometrics $\{\hat{\rho}_{A}: \rho\in \mathscr{P}_{X}\}$, where $\mathscr{P}_{X}$ is the family of all continuous quasi-pseudometrics from $(X\times X, \mathscr{U}_{X}^{-1}\times \mathscr{U}_{X})$ to $(\mathbb{R}, \mathscr{U}^{\star})$, generates the topology of the free Abelian paratopological group $AP(X)$.

\begin{theorem}\label{t0}
Let $X$ be a Tychonoff space, and
let $\mathscr{P}_{X}$ be the family of all continuous quasi-pseudometrics from $(X\times X, \mathscr{U}_{X}^{-1}\times \mathscr{U}_{X})$ to $(\mathbb{R}, \mathscr{U}^{\star})$ which are bounded by 1. Then the sets $$V_{\rho}=\{g\in AP(X): \hat{\rho}_{A}(e, g)<1\}$$ with $\rho\in\mathscr{P}_{X}$ form a local base at the neutral element $e$ of $AP(X)$.
\end{theorem}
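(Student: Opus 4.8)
The plan is to establish the two inclusions that together express that $\{V_\rho:\rho\in\mathscr{P}_X\}$ is a neighborhood base at $e$: first, that each $V_\rho$ is a neighborhood of $e$ in $AP(X)$ (so the topology generated by the $\hat\rho_A$ is no finer than that of $AP(X)$); and second, that every neighborhood of $e$ in $AP(X)$ contains some $V_\rho$ (so it is no coarser). The second inclusion is where the real work lies.

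For the first inclusion I would invoke the Abelian counterpart of Lemma~\ref{l11}: for $\rho\in\mathscr{P}_X$ the sets $U_\rho(\varepsilon)=\{g:N_\rho(g)<\varepsilon\}$ form a base at $e$ for a paratopological group topology $\mathscr{F}_\rho$ on $A_a(X)$ whose restriction to $X$ is the topology generated by $\rho$. Since $\rho$ is continuous, the $\rho$-balls are open in the original topology of $X$, so $\mathscr{F}_\rho\!\mid\! X$ is coarser than the original topology and the identity map $X\to(A_a(X),\mathscr{F}_\rho)$ is continuous. By the universal property of $AP(X)$ it extends to a continuous homomorphism $AP(X)\to(A_a(X),\mathscr{F}_\rho)$, which is the identity on the abstract group and hence forces $\mathscr{F}_\rho$ to be coarser than the topology of $AP(X)$. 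As $V_\rho=U_\rho(1)$ is $\mathscr{F}_\rho$-open (using subadditivity of the quasi-prenorm $N_\rho$ to see that $g_0+U_\rho(1-N_\rho(g_0))\subseteq V_\rho$, together with the fact that translations in a paratopological group are homeomorphisms), it is a neighborhood of $e$ in $AP(X)$.

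For the second inclusion, let $W\ni e$ be an open neighborhood. Continuity of addition lets me choose a decreasing sequence $W=W_1\supseteq W_2\supseteq\cdots$ of open neighborhoods of $e$ with $W_{i+1}+W_{i+1}+W_{i+1}\subseteq W_i$, and I set $W_0=A_a(X)$. I then transport these to $X$ via $\tilde W_i=\{(x,y)\in X\times X:-x+y\in W_i\}$. These are basic entourages of the left quasi-uniformity of $AP(X)$ induced on $X$; since that induced quasi-uniformity generates the original topology of $X$ and $\mathscr{U}_X$ is the finest quasi-uniformity doing so, each $\tilde W_i$ lies in $\mathscr{U}_X$. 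The three-fold refinement of the $W_i$ translates into $\tilde W_{i+1}\circ\tilde W_{i+1}\circ\tilde W_{i+1}\subseteq\tilde W_i$ with $\tilde W_0=X\times X$, so Lemma~\ref{l0}, followed by the rescaling of Lemma~\ref{l1}, yields a quasi-pseudometric $\rho$ on $X$, bounded by $1$ and quasi-uniform with respect to $\mathscr{U}_X$, satisfying $\{(x,y):\rho(x,y)\le 2^{-i}\}\subseteq\tilde W_{i-1}$; Lemma~\ref{l8} then gives $\rho\in\mathscr{P}_X$.

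It remains to verify $V_\rho\subseteq W$, and this is the main obstacle. For $h\in V_\rho\setminus\{e\}$ we have $\hat\rho_A(e,h)<1$, so Lemma~\ref{l2} supplies a decomposition $h=\sum_{i=1}^k(-y_i+z_i)$ with $y_i,z_i\in X$ and $\sum_{i=1}^k\rho(y_i,z_i)<1$. For each $i$ I choose the largest integer $k_i$ with $\rho(y_i,z_i)\le 2^{-k_i}$, which by the relation above places $-y_i+z_i\in W_{k_i-1}$, while the dyadic estimate $2^{-k_i}\le 2\rho(y_i,z_i)$ controls $\sum_i 2^{-(k_i-1)}$. The only delicate point is reconciling the \emph{additive} bound $\sum_i\rho(y_i,z_i)<1$ coming from the Graev norm with the \emph{dyadic} bookkeeping demanded by Lemma~\ref{l3}: this is precisely why $\rho$ must be taken as a suitable constant multiple of the Lemma~\ref{l0} quasi-pseudometric, the constant being chosen so that $\sum_i 2^{-(k_i-1)}\le 2^{-1}$. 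Lemma~\ref{l3} applied to the sequence $\{W_j\}$ then gives $h=\sum_i(-y_i+z_i)\in W_1=W$, completing the argument.
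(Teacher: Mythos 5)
Your proposal is correct and essentially reproduces the paper's own argument: the paper takes the same sequence of neighborhoods with $V_{i+1}+V_{i+1}+V_{i+1}\subset V_{i}$, the same entourages $U_{n}=\{(x,y)\in X\times X:-x+y\in V_{n}\}\in\mathscr{U}_{X}$, constructs $\rho=\min\{1,4\rho_{1}\}$ via Lemmas~\ref{l0} and~\ref{l8}, and concludes exactly as you do through the decomposition of Lemma~\ref{l2}, the dyadic choice of the $k_{i}$, and Lemma~\ref{l3} (your first paragraph proves the easy direction, which the paper leaves implicit in the machinery of \cite{RS}). The only detail to patch is the degenerate case $\rho(y_{i},z_{i})=0$, where no largest $k_{i}$ with $\rho(y_{i},z_{i})\le 2^{-k_{i}}$ exists; as in the paper, simply choose those $k_{i}$ large enough that the total dyadic sum stays below $2^{-1}$.
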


\begin{proof}
Let $V$ be an open neighborhood of $e$ in $AP(X)$. Since $AP(X)$ is a paratopological group, there exists a sequence $\{V_{n}: n\in \mathbb{N}\}$ of open neighborhoods of $e$ in $AP(X)$ such that $V_{1}\subset V$ and $V_{i+1}+V_{i+1}+V_{i+1}\subset V_{i}$ for every $i\in \mathbb{N}$. For each $n\in \mathbb{N}$, put $$U_{n}=\{(x, y)\in X\times X: -x+y\in V_{n}\}.$$ Then each $U_{n}$ is an element of the universal quasi-uniformity $\mathscr{U}_{X}$ on the space $X$ and $U_{n+1}\circ U_{n+1}\circ U_{n+1}\subset U_{n}$. Hence, it follows from Lemmas~\ref{l0} and~\ref{l8} that there is a continuous quasi-pseudometric $\rho_{1}$ on $X$ such that, for each $n\in \mathbb{N},$$$\{(x, y)\in X\times X: \rho_{1} (x, y)<2^{-n}\}\subset U_{n}.$$Let $\rho =\mbox{min}\{1, 4\rho_{1}\}$. Then $\rho\in\mathscr{P}_{X}$.

Claim: We have $V_{\rho}\subset V.$

Indeed, let $h\in V_{\rho}$. It follows from Lemma~\ref{l2} that the element $h$ can be written in the form $$h=(-x_{1}+y_{1})+\cdots +(-x_{m}+y_{m}),\ \mbox{where}\ x_{i}, y_{i}\in X\ \mbox{for each}\ 1\leq i\leq m,$$ such that $$\hat{\rho}_{A}(e, h)=\rho (x_{1}, y_{1})+\cdots +\rho (x_{m}, y_{m})< 1.$$ It follows from the definition of $\rho$ and $\rho_{1}$ that $\hat{\rho}=4\hat{\rho_{1}}$. Therefore, we have $$\hat{\rho}_{1}(e, h)=\rho_{1}(x_{1}, y_{1})+\cdots +\rho_{1}(x_{m}, y_{m})<\frac{1}{4}.$$
If $1\leq i\leq m$ and $\rho_{1}(x_{i}, y_{i})>0$ then we choose a $k_{i}\in \mathbb{N}$ such that $$2^{-k_{i}-1}\leq \rho_{1}(x_{i}, y_{i})<2^{-k_{i}}.$$ And then, if $1\leq i\leq m$ and $\rho_{1}(x_{i}, y_{i})=0$ then we choose a sufficiently large $k_{i}\in \mathbb{N}$ such that $\sum_{i=1}^{m}2^{-k_{i}}<\frac{1}{2}$. For every $1\leq i\leq m$, since $-x_{i}+y_{i}\in V_{k_{i}}$, it follows from Lemma~\ref{l3} that $$h=(-x_{1}+y_{1})+\cdots +(-x_{m}+y_{m})\in V_{k_{1}}+\cdots +V_{k_{m}}\subset V_{1}\subset V.$$ Therefore, we have $V_{\rho}\subset V.$
\end{proof}

We don't know whether a similar assertion is valid for the free paratopological group $FP(X)$.

However, we have the following Theorem~\ref{t3}. Our argument will be based on the following combinatorial lemma (Readers can consult the proof in \cite[Lemma 7.2.8]{A2008}.).

\begin{lemma}\label{l4}
Let $g=x_{1}\cdots x_{2n}$ be a reduced element of $F_{a}(X)$, where $x_{1}, \cdots , x_{2n}\in X\cup X^{-1}$, and let $\varphi$ be a scheme on $\{1, 2, \cdots, 2n\}$. Then there are natural numbers $1\leq i_{1}<\cdots <i_{n}\leq 2n$ and elements $h_{1}, \cdots , h_{n}\in F_{a}(X)$ satisfying the following two conditions:\\
i) $\{i_{1}, \cdots , i_{n}\}\cup\{i_{\varphi(1)}, \cdots , i_{\varphi(n)}\}=\{1, 2, \cdots , 2n\};$\\
ii) $g=(h_{1}x_{i_{1}}x_{\varphi(i_{1})}h_{1}^{-1})\cdots (h_{n}x_{i_{n}}x_{\varphi(i_{n})}h_{n}^{-1}).$
\end{lemma}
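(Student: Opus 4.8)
The plan is to argue by induction on $n$, proving the slightly more general statement in which $g=x_1\cdots x_{2n}$ is an \emph{arbitrary} word in the alphabet $X\cup X^{-1}$, not necessarily reduced. This costs nothing, since both (i) and (ii) concern only the positions $\{1,\dots,2n\}$ together with a formal identity in $F_a(X)$, and it frees the induction from having to preserve reducedness. For $n=1$ the scheme pairs $1$ with $2$, so one takes $i_1=1$ and $h_1=e$; then $g=x_1x_2=h_1x_{i_1}x_{\varphi(i_1)}h_1^{-1}$ and $\{i_1\}\cup\{\varphi(i_1)\}=\{1,2\}$.

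The engine of the induction is that every scheme contains an \emph{adjacent} pair, i.e. one of the form $\{j,j+1\}$. Indeed, choosing a pair $\{a,b\}$ of the scheme with $b-a$ minimal, if $b>a+1$ then $a+1$ lies strictly inside $[a,b]$, and its own pair must be nested in $[a,b]$ by the non-crossing property, yielding a strictly shorter interval and contradicting minimality; hence $b=a+1$. Fixing such a pair and writing $p=x_1\cdots x_{j-1}$, inserting $p^{-1}p$ gives the formal identity
\[
g=x_1\cdots x_{j-1}\,(x_jx_{j+1})\,x_{j+2}\cdots x_{2n}
 =\bigl(p\,x_jx_{j+1}\,p^{-1}\bigr)\cdot g^{\ast},
\]
where $g^{\ast}=x_1\cdots x_{j-1}x_{j+2}\cdots x_{2n}$ has length $2n-2$. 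The order-preserving bijection $\sigma$ from $\{1,\dots,2n-2\}$ onto $\{1,\dots,2n\}\setminus\{j,j+1\}$ relabels the letters of $g^{\ast}$ and transports $\varphi$ (restricted to the surviving positions) to a scheme $\psi$ on $\{1,\dots,2n-2\}$; that $\psi$ is again non-crossing follows because deleting the interval $[j,j+1]$ and shifting the remaining indices down preserves the disjoint-or-nested relation. Applying the induction hypothesis to $g^{\ast}$ and $\psi$ factors $g^{\ast}$ into $n-1$ conjugates of the pair-words $x_{\sigma(m_s)}x_{\varphi(\sigma(m_s))}$, and prepending $p\,x_jx_{j+1}\,p^{-1}$ exhibits $g$ as a product of $n$ conjugates, one per pair of $\varphi$. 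Throughout I would take each representative to be the \emph{smaller} element of its pair, which is automatic since letters are pulled out in word order; this makes the internal order $x_{i_k}x_{\varphi(i_k)}$ correct and gives (i), as the representatives hit exactly one index of each pair.

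It remains to arrange the factors so their representatives increase, $i_1<\cdots<i_n$, as (ii) demands; the factorization above need not be so ordered, because the pulled-out adjacent pair appears first while its representative $j$ may fall anywhere among the others. Here I would invoke the elementary identity $ab=b\,(b^{-1}ab)$, which transposes two adjacent factors while keeping each a conjugate of the \emph{same} pair-word, only the conjugating element changing. A finite sequence of such transpositions sorts the factors by representative index, with the new conjugators absorbed into the $h_k$; this alters neither the set of representatives nor the internal words, so (i) and (ii) both survive.

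I expect this ordering step to be the genuinely delicate point: the naive induction delivers the adjacent pair as the first factor, so the required increasing order cannot be read off directly and must be produced by the conjugate-swapping identity, with a verification that it preserves both conditions. The remaining ingredients — existence of an adjacent pair, the relabeling $\sigma$ and induced scheme $\psi$, and the bookkeeping that representatives are the smaller elements of their pairs — are routine once the non-reduced formulation is adopted, which is exactly what lets $g^{\ast}$ be fed back into the induction even though it may fail to be reduced.
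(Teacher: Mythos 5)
Your argument is correct and complete. Note first that the paper contains no proof of this lemma at all: it is stated with a pointer to \cite[Lemma 7.2.8]{A2008}, so the comparison is really with that standard textbook argument, and your proof is essentially it --- induction on $n$ over arbitrary (not necessarily reduced) words, which is indeed the right generalization since conclusion (ii) is a formal identity in $F_{a}(X)$ and the extracted word $g^{\ast}$ can fail to be reduced. Your three key steps are all sound: the minimal-interval argument for the existence of an adjacent pair $\{j,j+1\}$ works because distinct pairs of the partition are disjoint, so a pair meeting the interior of a minimal interval $[a,b]$ would be properly nested in it; the identity $g=(p\,x_{j}x_{j+1}\,p^{-1})\,g^{\ast}$ is a correct formal insertion of $p^{-1}p$; and the order-preserving relabeling does carry the restricted pairing to a scheme on $\{1,\dots,2n-2\}$. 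You are also right that the ordering of the representatives is the one residual issue in this version of the induction, and your fix via $ab=b\,(b^{-1}ab)$ is valid: each transposition keeps every factor a conjugate of the same pair-word $x_{i}x_{\varphi(i)}$, merely changing the conjugator, so conditions (i) and (ii) survive the sort. One remark: the sorting step can be avoided entirely by recursing instead on the pair $\{1,\varphi(1)\}$. Writing $m=\varphi(1)$ and $g=x_{1}u\,x_{m}v$, the non-crossing property makes $\{2,\dots,m-1\}$ and $\{m+1,\dots,2n\}$ unions of pairs, and the identity $g=(x_{1}x_{m})\,(x_{m}^{-1}u\,x_{m})\,v$ together with the two recursive calls produces the factors with representatives automatically increasing behind the representative $1$; this variant trades your explicit transposition bookkeeping for a slightly more structured recursion, but both routes are legitimate proofs of the lemma.
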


A paratopological group $G$ has an {\it invariant
basis}  if there exists a family $\mathscr{L}$ of continuous and invariant quasi-pseudometric on $G$ such that the family $\{U_{\rho}: \rho\in\mathscr{L}\}$ as a base at the neutral element $e$ in $G$, where each $U_{\rho}=\{g\in G: \rho(e, g)<1\}$.

\begin{theorem}\label{t3}
For each Tychonoff space $X$, if the abstract group $F_{a}(X)$ admits the maximal paratopological group topology $\mathscr{F}_{\mbox{inv}}$ with invariant basis such that every continuous mapping $f: X\rightarrow H$ to a paratopological group $H$ with invariant basis can be extended to a continuous homomorphism $\tilde{f}: (F_{a}(X), \mathscr{F}_{\mbox{inv}})\rightarrow H$, then the family of all sets of the form $$U_{\rho}=\{g\in F_{a}(X): \hat{\rho}(e, g)<1\},$$where $\rho$ is a continuous quasi-pseudometric from $(X\times X, \mathscr{U}_{X}^{-1}\times \mathscr{U}_{X})$ to $(\mathbb{R}, \mathscr{U}^{\star})$, with $\rho\leq 1$ constitutes a base of the topology $\mathscr{F}_{\mbox{inv}}$ at the neutral element $e$ of $F_{a}(X)$.
\end{theorem}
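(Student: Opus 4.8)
The plan is to mimic the proof of Theorem~\ref{t0}, showing the two things needed for $\{U_{\rho}\}$ to be a base at $e$ for $\mathscr{F}_{\mbox{inv}}$ separately: that each $U_{\rho}$ is an $\mathscr{F}_{\mbox{inv}}$-neighborhood of $e$, and that every $\mathscr{F}_{\mbox{inv}}$-neighborhood of $e$ contains some $U_{\rho}$. For the first of these, fix a continuous quasi-pseudometric $\rho\leq 1$ from $(X\times X,\mathscr{U}_{X}^{-1}\times\mathscr{U}_{X})$ to $(\mathbb{R},\mathscr{U}^{\star})$ and form its Graev extension $\hat{\rho}$. Since the quasi-prenorm $N_{\rho}$ is invariant, Lemma~\ref{l11} shows that the sets $U_{\rho}(\varepsilon)$ form a base at $e$ for a paratopological group topology $\mathscr{F}_{\rho}$ which has an invariant basis (namely $\{\hat{\rho}\}$) and whose restriction to $X$ is $\tau(\rho)$. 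Because $\rho$ is continuous, the inclusion $X\rightarrow (F_{a}(X),\mathscr{F}_{\rho})$ is a continuous map into a paratopological group with invariant basis, so the universal property of $\mathscr{F}_{\mbox{inv}}$ extends it to a continuous homomorphism $(F_{a}(X),\mathscr{F}_{\mbox{inv}})\rightarrow (F_{a}(X),\mathscr{F}_{\rho})$. This homomorphism is the identity on the abstract group $F_{a}(X)$, whence $\mathscr{F}_{\rho}\subseteq\mathscr{F}_{\mbox{inv}}$ and in particular $U_{\rho}=U_{\rho}(1)$ is open in $\mathscr{F}_{\mbox{inv}}$.

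For the reverse inclusion I would begin with an arbitrary $\mathscr{F}_{\mbox{inv}}$-neighborhood $V$ of $e$ and manufacture a suitable $\rho$. Using that $\mathscr{F}_{\mbox{inv}}$ is a paratopological group topology with an invariant basis, I choose conjugation-invariant open neighborhoods $V_{1}\supseteq V_{2}\supseteq\cdots$ of $e$ with $V_{1}\subseteq V$ and $V_{n+1}\cdot V_{n+1}\cdot V_{n+1}\subseteq V_{n}$ for all $n$. Put $U_{n}=\{(x,y)\in X\times X: x^{-1}y\in V_{n}\}$. Using the structural fact that $X$ sits inside $(F_{a}(X),\mathscr{F}_{\mbox{inv}})$ with its original topology, each $U_{n}$ belongs to the left induced quasi-uniformity and hence lies in $\mathscr{U}_{X}$, and $U_{n+1}\circ U_{n+1}\circ U_{n+1}\subseteq U_{n}$. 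Lemma~\ref{l0} then furnishes a quasi-pseudometric $\rho_{0}$ with $U_{n}\subseteq\{(x,y):\rho_{0}(x,y)\leq 2^{-n}\}\subseteq U_{n-1}$, and by Lemmas~\ref{l1} and~\ref{l8} the function $\rho=\min\{1,4\rho_{0}\}$ is a continuous quasi-pseudometric bounded by $1$; thus $U_{\rho}$ is among the sets of the statement.

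It remains to prove $U_{\rho}\subseteq V$. For $g\in U_{\rho}$ we have $N_{\rho}(g)<1$, and Lemma~\ref{l10} supplies an almost irreducible word $\mathcal{X}_{g}=x_{1}\cdots x_{2n}$ with $[\mathcal{X}_{g}]=g$ and a scheme $\varphi_{g}$ realizing $N_{\rho}(g)=\frac{1}{2}\sum_{i=1}^{2n}\rho^{\ast}(x_{i}^{-1},x_{\varphi_{g}(i)})$. Arguing as in the last part of Lemma~\ref{l2}, the strict inequality $N_{\rho}(g)<1$ forces the word to contain no letter equal to $e$ (a scheme-pair matching $e$ to a genuine letter already contributes at least $1$ to $N_{\rho}(g)$), so $\mathcal{X}_{g}$ is reduced and equals $g$ and Lemma~\ref{l4} applies: there are indices $i_{1}<\cdots<i_{n}$ and elements $h_{1},\ldots,h_{n}$ with $g=(h_{1}x_{i_{1}}x_{\varphi_{g}(i_{1})}h_{1}^{-1})\cdots(h_{n}x_{i_{n}}x_{\varphi_{g}(i_{n})}h_{n}^{-1})$. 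For each block I would read off a level $k_{j}$ from $2^{-k_{j}-1}\leq\rho^{\ast}(x_{i_{j}}^{-1},x_{\varphi_{g}(i_{j})})<2^{-k_{j}}$ (choosing $k_{j}$ large when the value is $0$) and, using the nesting of the $U_{m}$ together with conjugation-invariance of $V_{k_{j}}$, conclude that the two-letter product $x_{i_{j}}x_{\varphi_{g}(i_{j})}$ lies in $V_{k_{j}}$ and therefore so does the conjugate $h_{j}x_{i_{j}}x_{\varphi_{g}(i_{j})}h_{j}^{-1}$. Since $N_{\rho}(g)<1$ keeps $\sum_{j}2^{-k_{j}}$ below $\frac{1}{2}$, Lemma~\ref{l3} yields $g\in V_{k_{1}}\cdots V_{k_{n}}\subseteq V_{1}\subseteq V$, as required.

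The main obstacle, and the reason the hypothesis insists on an invariant basis, is concentrated in the passage through Lemma~\ref{l4}. In the Abelian setting of Theorem~\ref{t0} the decomposition of Lemma~\ref{l2} produces commuting blocks $-x_{i}+y_{i}$ and conjugation never intervenes, whereas here Lemma~\ref{l4} unavoidably produces genuine conjugates $h_{j}(\cdots)h_{j}^{-1}$ whose size cannot be estimated from $\hat{\rho}$ alone unless the basic neighborhoods $V_{k_{j}}$ are conjugation-invariant. I expect the delicate bookkeeping to be exactly this: matching each scheme-pair to the correct dyadic level $k_{j}$ (with the membership $x_{i_{j}}x_{\varphi_{g}(i_{j})}\in V_{k_{j}}$ following immediately in one orientation of the two letters and only after a further use of invariance in the other), verifying that $N_{\rho}(g)<1$ rules out letters equal to $e$ so that Lemma~\ref{l4} is applied to a genuinely reduced word, and keeping $\sum_{j}2^{-k_{j}}$ controlled. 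A preliminary point that must be secured before any of this is that $\mathscr{F}_{\mbox{inv}}$ restricts to the original topology of $X$, since this is what places the sets $U_{n}$ inside $\mathscr{U}_{X}$; one inclusion follows from the first paragraph together with the Tychonoff property of $X$, and the other is a structural feature of the maximal topology.
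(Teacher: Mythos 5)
Your proposal is correct in outline, but it takes a genuinely different route from the paper's. The paper argues ``from below'': it \emph{defines} a topology on $F_{a}(X)$ by declaring the family $\mathscr{N}=\{U_{\rho}\}$ to be a base at $e$ (Lemma~\ref{l11} together with Proposition 3.8 of \cite{PN} make this a paratopological group topology with invariant basis), and then verifies the universal extension property for it directly: given a continuous $f\colon X\rightarrow H$ with $H$ carrying an invariant basis, it extracts an invariant quasi-prenorm $N$ on $H$ with $W=\{h: N(h)<1\}\subset V$, sets $\rho(x,y)=N(f(x)^{-1}f(y))$, and uses Lemma~\ref{l4} plus invariance of $N$ to bound $N(\tilde{f}(g))$ by the scheme sum, giving $\tilde{f}(U_{\rho})\subset W$; the identification of this constructed topology with the hypothesized maximal one is left implicit. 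You instead work ``from above'' with the abstractly given $\mathscr{F}_{\mbox{inv}}$ and squeeze its neighborhood filter by two inclusions: your first paragraph (each $\mathscr{F}_{\rho}\subseteq\mathscr{F}_{\mbox{inv}}$, via the extension property applied to $H=(F_{a}(X),\mathscr{F}_{\rho})$) is precisely the half the paper leaves implicit, and your second paragraph transplants the proof of Theorem~\ref{t0} to the non-Abelian setting, with conjugation-invariant $V_{n}$'s absorbing the conjugates $h_{j}(\cdot)h_{j}^{-1}$ produced by Lemma~\ref{l4} and Lemma~\ref{l3} finishing; you correctly isolate invariance as the exact point where the Abelian argument would otherwise break. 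What each buys: your version is more faithful to the literal conditional statement and never leaves $F_{a}(X)$; the paper's version simultaneously establishes the extension property of the constructed topology (the existence half) and does not need to know in advance that $\mathscr{F}_{\mbox{inv}}$ traces the original topology on $X$. Your argument does need that fact, to place the entourages $U_{n}$ in $\mathscr{U}_{X}$, and the theorem's hypothesis as literally stated does not grant it: any finer invariant-basis topology (e.g.\ the one generated by the discrete invariant metric) trivially inherits the extension property, so ``maximal'' is only meaningful once continuity of the inclusion $X\rightarrow(F_{a}(X),\mathscr{F}_{\mbox{inv}})$ is built into the definition --- evidently the intended reading, and what your ``structural feature'' must mean; you flag this honestly, but be aware it is an assumption about the hypothesis, not a deduction from it.

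Two of your steps need repair, though both are mechanical. First, your exclusion of the letter $e$ is incomplete: a scheme-pair matching $e$ with a genuine letter indeed contributes at least $1$ (since $\rho^{\ast}(e,x)=\rho^{\ast}(x^{-1},e)=1$), but a pair matching $e$ with $e$ contributes $0$ and is not excluded by $N_{\rho}(g)<1$; such pairs must be deleted (the restriction of a scheme to the complement of a matched pair is again a scheme, with $\Gamma_{\rho}$ and the reduced form unchanged), and one must then check that the surviving word can be taken reduced before Lemma~\ref{l4} is invoked --- the paper's own proof glosses the same point by simply asserting a realizing scheme on the reduced word of $g$. Second, your dyadic bookkeeping reads the levels $k_{j}$ off from $\rho^{\ast}$, i.e.\ from $\rho=\min\{1,4\rho_{0}\}$; from $2^{-k_{j}-1}\leq\rho^{\ast}(x_{i_{j}}^{-1},x_{\varphi_{g}(i_{j})})$ you only get $\sum_{j}2^{-k_{j}}\leq 2N_{\rho}(g)<2$, not the bound $\frac{1}{2}$ that Lemma~\ref{l3} requires. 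The factor $4$ exists precisely so that the levels can be read from $\rho_{0}$ instead: $N_{\rho}(g)<1$ forces the corresponding $\rho_{0}$-sum below $\frac{1}{4}$, whence choosing $k_{j}$ with $2^{-k_{j}-1}\leq\rho_{0}(x,y)<2^{-k_{j}}$ gives $\sum_{j}2^{-k_{j}}<\frac{1}{2}$ together with the memberships $x_{i_{j}}x_{\varphi_{g}(i_{j})}\in V_{k_{j}}$ (up to the index shift hidden in Lemma~\ref{l0}); this is exactly the computation in the proof of Theorem~\ref{t0}, so the repair consists of copying it.
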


\begin{proof}
For each $\rho\in\mathscr{P}_{X}$, put $$U_{\rho}=\{g\in F_{a}(X): N_{\rho}(g)<1\},\ \mbox{and}\ \mathscr{N}=\{U_{\rho}: \rho\in\mathscr{P}_{X}\}, $$where $N_{\rho}(g)$ is the invariant quasi-prenorm on $F_{a}(X)$ defined by $\hat{\rho}(g, h)=N_{\rho}(g^{-1}h)$. By Lemma~\ref{l11} and Proposition 3.8 in \cite{PN}, it is easy to see that $\mathscr{N}$ as a base at the neutral element $e$ of $F_{a}(X)$ for a Hausdorff paratopological group topology. We denote this topology by $\mathscr{F}_{\mbox{inv}}$. Since $\hat{\rho}$ is invariant on $F_{a}(X)$, the paratopological group $FP_{\mbox{inv}}(X)=(FP(X), \mathscr{F}_{\mbox{inv}})$ has an invariant basis, and hence $\hat{\rho}$ is continuous on $FP_{\mbox{inv}}(X)$.

Let $f: X\rightarrow H$ be a continuous mapping of $X$ to a paratopological group $H$ with invariant basis. Let $\tilde{f}$ be the extension of $f$ to a homomorphism of $F_{a}(X)$ to $H$.

Claim: The map $\tilde{f}: FP_{\mbox{inv}}(X)\rightarrow H$ is a continuous homomorphism.

Let $V$ be an open neighborhood of the neutral element of $H$. Then there exists an invariant quasi-prenorm $N$ on $H$ such that $W=\{h\in H: N(h)<1\}\subset V$ by Lemma~\ref{l1}. Therefore, we can define a quasi-pseudometric $\rho$ on $X$ by $\rho (x, y)=N(f^{-1}(x)f(y))$ for all $x, y\in X$. Next, we shall show that $\tilde{f}(U_{\rho})\subset W$. Indeed, take an arbitrary reduced element $g\in U_{\rho}$ distinct from the neutral element of $F_{a}(X)$. Obviously, we have $\hat{\rho}(e, g)<1$. Moreover, it is easy to see that $g$ has even length, say $g=x_{1}\cdots x_{2n}$, where $x_{i}\in X\cup X^{-1}$ for each $1\leq i\leq 2n$. It follows from $\hat{\rho}(e, g)<1$ that there is a scheme $\varphi$ on $\{1, 2, \cdots, 2n\}$ such that $$\hat{\rho}(e, g)=\frac{1}{2}\sum_{i=1}^{2n}\rho^{\ast}(x_{i}^{-1}, x_{\varphi(i)})<1.$$By Lemma~\ref{l4}, we can find a partition $\{1, 2, \cdots , 2n\}=\{i_{1}, \cdots , i_{n}\}\cup\{i_{\varphi(1)}, \cdots , i_{\varphi(n)}\}$ and a representation of $g$ as a product $g=g_{1}\cdots g_{n}$ such that $g_{k}=h_{k}x_{i_{k}}x_{\varphi(i_{k})}h_{k}^{-1}$ for each $k\leq n$, where $h_{k}\in F_{a}(X)$. Since $N$ is invariant, we have
\begin{eqnarray}
N(\tilde{f}(g))&\leq&\sum_{i=1}^{n}N(\tilde{f}(g_{k}))=\sum_{i=1}^{n}N(\tilde{f}(x_{k})\tilde{f}(x_{\varphi(k)})) \nonumber\\
&=&\rho^{\ast}(x_{1}^{-1}, x_{\varphi(1)})+\cdots +\rho^{\ast}(x_{n}^{-1}, x_{\varphi(n)})\nonumber\\
&<&1.\nonumber
\end{eqnarray}
Therefore, we have $\tilde{f}(g)\in W$, and it follows that $\tilde{f}(U_{\rho})\subset W\subset V$. Hence $\tilde{f}$ is a continuous homomorphism.
\end{proof}

\bigskip

\section{Topological monomorphisms between free paratopological groups}
In order to prove one of our main theorems, we also need the following lemma.

\begin{lemma}\label{l7}
Let $(X, \mathscr{U}_{X})$ be a quasi-uniform subspace of a Tychonoff space $(Y, \mathscr{U}_{Y})$. Then $X$ is quasi-P$^{\ast}$-embedded in $Y$.
\end{lemma}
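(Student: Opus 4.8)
The plan is to start from a bounded continuous quasi-pseudometric $d$ on $X$ (which, rescaling, I may assume is bounded by $1$) and to manufacture, with the help of Lemma~\ref{l0}, a quasi-pseudometric $\rho$ on $Y$ that is quasi-uniform with respect to $\mathscr{U}_Y$ and dominates $d$ on $X\times X$; then to repair $\rho$ into a genuine extension of $d$ by a largest-minorant (chain) construction, and to read off continuity from Lemma~\ref{l8}.

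First I would translate the continuity of $d$ into a statement about $\mathscr{U}_X$. Fixing $\varepsilon>0$, continuity of $d$ at a diagonal point $(x,x)$ forces the ball $\{y:d(x,y)<\varepsilon\}$ to be a neighbourhood of $x$ in $X$, and the triangle inequality then makes every such ball open in $X$; hence $d$ generates a quasi-uniformity $\mathscr{D}$ on $X$ whose topology is coarser than the topology of $X$. Since $\mathscr{U}_X$ is the \emph{finest} quasi-uniformity inducing the topology of $X$, the supremum quasi-uniformity $\mathscr{U}_X\vee\mathscr{D}$ still induces the topology of $X$ and is therefore contained in $\mathscr{U}_X$; consequently $\mathscr{D}\subseteq\mathscr{U}_X$, so $\{(x,y):d(x,y)<2^{-n}\}\in\mathscr{U}_X$ for each $n$. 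Because $X$ is a quasi-uniform subspace of $Y$, i.e. $\mathscr{U}_X=\mathscr{U}_Y\mid X$, each of these sets is the trace on $X\times X$ of some member $\tilde V_n$ of $\mathscr{U}_Y$.

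Next I would build $\rho$. Shrinking the $\tilde V_n$ inside $\mathscr{U}_Y$, I can choose $V_0=Y\times Y$ and $V_n\in\mathscr{U}_Y$ with $V_{n+1}\circ V_{n+1}\circ V_{n+1}\subseteq V_n$ and $V_n\cap(X\times X)\subseteq\{d<2^{-n}\}$. Lemma~\ref{l0} then yields a quasi-pseudometric $\rho_0$ on $Y$ with $V_n\subseteq\{\rho_0\le 2^{-n}\}\subseteq V_{n-1}$, and setting $\rho=\min\{1,4\rho_0\}$ as in Lemma~\ref{l1} gives a quasi-pseudometric bounded by $1$ that is quasi-uniform with respect to $\mathscr{U}_Y$; comparing its level sets on $X\times X$ with the $V_{n-1}$ yields the domination $\rho\ge d$ on $X\times X$. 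Now define $p$ on $Y\times Y$ to equal $d$ on $X\times X$ and $\rho$ off $X\times X$, and let $D(y,y')$ be the infimum of $\sum_i p(z_i,z_{i+1})$ over all finite chains $y=z_0,\dots,z_m=y'$. Such a $D$ is automatically a quasi-pseudometric with $D(y,y)=0$, and since $p\le\rho$ everywhere we have $D\le\rho$, whence $\{D<r\}\supseteq\{\rho<r\}\in\mathscr{U}_Y$; thus $D$ is quasi-uniform with respect to $\mathscr{U}_Y$ and, by Lemma~\ref{l8}, continuous from $(Y\times Y,\mathscr{U}_Y^{-1}\times\mathscr{U}_Y)$ to $(\mathbb{R},\mathscr{U}^{\star})$.

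The one real point to verify, and the step I expect to be the main obstacle, is the exact agreement $D\mid(X\times X)=d$: producing \emph{some} continuous quasi-pseudometric on $Y$ is routine, but forcing its restriction to be equal to $d$ rather than merely quasi-uniformly equivalent is what the chain construction and the domination $\rho\ge d$ are engineered to secure. The inequality $D\le d$ on $X\times X$ is immediate from length-one chains. For the reverse, I would cut any chain joining two points of $X$ at its successive visits to $X$: each maximal excursion through $Y\setminus X$ between consecutive $X$-points $a,b$ uses only $p=\rho$ steps and so costs at least $\rho(a,b)\ge d(a,b)$, while each step between adjacent $X$-points already costs $d$. Replacing every excursion by the single step $p(a,b)=d(a,b)$ therefore does not increase the $p$-length and produces a chain lying inside $X$, whose length is at least $d$ of the two endpoints by the triangle inequality for $d$; taking the infimum gives $D(y,y')\ge d(y,y')$, and the lemma follows.
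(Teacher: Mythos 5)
Your proof is correct and has the same overall architecture as the paper's: first manufacture a quasi-pseudometric $\rho$ on $Y$ that is quasi-uniform with respect to $\mathscr{U}_{Y}$ and dominates $d$ on $X\times X$, then repair it into an exact extension by an infimum construction, and read off continuity from Lemma~\ref{l8}. The differences are in the two implementation steps, and both of your choices are legitimate. For building $\rho$, the paper invokes \cite{SA} to get, for each $V_{i}\in\mathscr{U}_{Y}$ with $V_{i}\cap(X\times X)\subseteq\{d<2^{-i}\}$, a continuous quasi-pseudometric $d_{i}$ with $\{d_{i}<\frac{1}{4}\}\subseteq V_{i}$, and sets $\rho=8\sum_{i}2^{-i}d_{i}$; you instead apply Lemma~\ref{l0} to a single nested sequence $V_{n}$ and cap via $\min\{1,4\rho_{0}\}$ as in Lemma~\ref{l1}, which keeps the argument internal to the paper's own lemmas, and your level-set comparison does yield $d\leq\min\{1,4\rho_{0}\}$ on $X\times X$. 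You also supply a justification the paper leaves implicit, namely that $\{(x,y): d(x,y)<\varepsilon\}\in\mathscr{U}_{X}$: your argument via the supremum quasi-uniformity $\mathscr{U}_{X}\vee\mathscr{D}$ and the maximality of $\mathscr{U}_{X}$ is exactly what is needed, since the paper's choice of the $V_{i}$ silently uses this together with $\mathscr{U}_{X}=\mathscr{U}_{Y}\mid X$. For the repair, the paper takes $\tilde{d}=\min\{\rho,\rho^{\prime}\}$ with $\rho^{\prime}(x,y)=\inf\{\rho(x,a)+d(a,b)+\rho(b,y): a,b\in X\}$, i.e.\ chains with a single $d$-block, whereas your $D$ is the infimum over arbitrary chains; the two in fact coincide, because the domination $\rho\geq d$ on $X\times X$ lets one merge consecutive $d$-blocks (a connecting $\rho$-step between points of $X$ costs at least the corresponding $d$-distance), so every chain reduces to at most one $d$-block. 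Your version buys an automatic triangle inequality for the extension --- a point the paper passes over with ``one can easily prove,'' though it genuinely requires $\rho\geq d$ on $X\times X$ --- at the price of the excursion-cutting verification of $D\mid(X\times X)=d$, which you carry out correctly.
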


\begin{proof}
Let $d$ be a bounded, continuous quasi-pseudometric from $(X\times X, \mathscr{U}_{X}^{-1}\times \mathscr{U}_{X})$ to $(\mathbb{R}, \mathscr{U}^{\star})$. One can assume that $d$ is bounded by $\frac{1}{2}$. For each $i\in \mathbb{N}$, take a $V_{i}\in \mathscr{U}_{Y}$ satisfying $V_{i}\cap (X\times X)\subset \{(x, y)\in X\times X: d(x, y)<\frac{1}{2^{i}}\}$, and then by \cite[Chap. 3, Proposition 2.4 and Theorem 2.5]{SA}, take a continuous quasi-pseudometric $d_{i}$ from $(Y\times Y, \mathscr{U}_{Y}^{-1}\times \mathscr{U}_{Y})$ to $(\mathbb{R}, \mathscr{U}^{\star})$ such that $d_{i}$ is bounded by 1, quasi-uniform with respect to $\mathscr{U}_{Y}$ and $\{(x, y)\in Y\times Y: d_{i}(x, y)<\frac{1}{4}\}\subset V_{i}$. Put $$\rho(x, y) =8\sum_{i=1}^{\infty}\frac{1}{2^{i}}d_{i}(x, y).$$ One can easily prove that $\rho$ is a continuous quasi-pseudometric from $(Y\times Y, \mathscr{U}_{Y}^{-1}\times \mathscr{U}_{Y})$ to $(\mathbb{R}, \mathscr{U}^{\star})$. Moreover, it is easy to see that $\rho$ is quasi-uniform with respect to $\mathscr{U}_{Y}$ and satisfies $d(x, y)\leq\rho(x, y)$ for all $x, y\in X$. Put $$\rho^{\prime}(x, y)=\inf\{\rho(x, a)+d(a, b)+\rho(b, y): a, b\in X\},\ \mbox{where}\ x, y\in Y.$$ Let $$\tilde{d}=\min\{\rho(x, y), \rho^{\prime}(x, y)\}.$$ Obviously, $\tilde{d}$ is quasi-uniform with respect to $\mathscr{U}_{Y}$. It follows from Lemma~\ref{l8} that $\tilde{d}$ is a continuous quasi-pseudometric from $(Y\times Y, \mathscr{U}_{Y}^{-1}\times \mathscr{U}_{Y})$ to $(\mathbb{R}, \mathscr{U}^{\star})$. Moreover, we have $\tilde{d}|X\times X=d.$ Therefore, $X$ is quasi-P$^{\ast}$-embedded in $Y$.
\end{proof}

Now, we shall prove our main theorem, which gives an affirmative answer to Question~\ref{q2}.

\begin{theorem}
Let $X$ be an arbitrary subspace of a Tychonoff space $Y$. Then the natural mapping $\hat{e}_{X, Y}: AP(X)\rightarrow AP(Y)$ is a topological monomorphism if and only if $X$ is quasi-P$^{\ast}$-embedded in $Y$.
\end{theorem}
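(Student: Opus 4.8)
The plan is to compare neighbourhood bases at the neutral element $e$ and to exploit Theorem~\ref{t0}. Recall that $\hat{e}_{X,Y}$ is always a continuous monomorphism, and that it is a topological monomorphism precisely when it is a homeomorphism onto its image; since it is already continuous, this reduces to showing that the original topology of $AP(X)$ is no finer than the one induced from $AP(Y)$. By Theorem~\ref{t0}, applied to both $X$ and $Y$, a base at $e$ in $AP(X)$ is $\{V_{\rho}:\rho\in\mathscr{P}_{X}\}$ and a base at $e$ in $AP(Y)$ is $\{V_{\sigma}:\sigma\in\mathscr{P}_{Y}\}$, where I identify $AP(X)$ with the subgroup $A_{a}(X)\subseteq A_{a}(Y)$ and $V_{\sigma}\cap A_{a}(X)$ is the trace of a $Y$-neighbourhood. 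Thus the theorem amounts to showing that the following neighbourhood-comparison condition is equivalent to quasi-P$^{\ast}$-embeddedness: \emph{for each $\rho\in\mathscr{P}_{X}$ there is $\sigma\in\mathscr{P}_{Y}$ with $V_{\sigma}\cap A_{a}(X)\subseteq V_{\rho}$}. The decisive algebraic fact I shall use throughout is that an element $g\in A_{a}(X)$ has its normal form supported in $X$, so that when Lemma~\ref{l2} is applied to $g$ inside $A_{a}(Y)$ all the letters $y_{i},z_{i}$ it produces lie in $X$.

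For the sufficiency direction I would start with $\rho\in\mathscr{P}_{X}$, use quasi-P$^{\ast}$-embeddedness to extend $\rho$ to a continuous quasi-pseudometric $\tilde{\rho}$ on $Y$, and set $\sigma=\min\{1,\tilde{\rho}\}$; then $\sigma\in\mathscr{P}_{Y}$ and $\sigma|_{X\times X}=\rho$ because $\rho\leq 1$. Given $g\in V_{\sigma}\cap A_{a}(X)$, Lemma~\ref{l2} yields a representation $g=\sum_{i=1}^{k}(-y_{i}+z_{i})$ with $y_{i},z_{i}\in X$ and $\hat{\sigma}_{A}(e,g)=\sum_{i}\sigma(y_{i},z_{i})<1$. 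Since $\sigma=\rho$ on $X\times X$, the same representation is admissible for $\hat{\rho}_{A}$ and gives $\hat{\rho}_{A}(e,g)\leq\sum_{i}\rho(y_{i},z_{i})=\hat{\sigma}_{A}(e,g)<1$, so $g\in V_{\rho}$. This proves $V_{\sigma}\cap A_{a}(X)\subseteq V_{\rho}$ and hence that $\hat{e}_{X,Y}$ is a topological monomorphism.

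For the necessity direction I would take a bounded continuous quasi-pseudometric $d$ on $X$, normalised so that $d\leq 1$, and regard $V_{d}$ as a basic open neighbourhood of $e$ in $AP(X)$. Using that $\hat{e}_{X,Y}$ is an embedding together with Theorem~\ref{t0} for $Y$, I obtain some $\sigma\in\mathscr{P}_{Y}$ with $V_{\sigma}\cap A_{a}(X)\subseteq V_{d}$; moreover the construction inside the proof of Theorem~\ref{t0} lets me take $\sigma$ \emph{quasi-uniform} with respect to $\mathscr{U}_{Y}$. The heart of the argument is a scaling computation: for $x,y\in X$ and $m\in\mathbb{N}$, Lemma~\ref{l2} gives $\hat{\sigma}_{A}(e,m(-x+y))=m\,\sigma(x,y)$ and $\hat{d}_{A}(e,m(-x+y))=m\,d(x,y)$ whenever these are less than $1$. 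Feeding the largest admissible $m$ into the inclusion $V_{\sigma}\cap A_{a}(X)\subseteq V_{d}$ yields $d(x,y)\leq 2\sigma(x,y)$ for all $x,y\in X$. Writing $\sigma'=2\sigma$, I then extend $d$ by the inf-convolution used in the proof of Lemma~\ref{l7}, namely $\tilde{d}=\min\{\sigma',\rho'\}$ with $\rho'(x,y)=\inf\{\sigma'(x,a)+d(a,b)+\sigma'(b,y):a,b\in X\}$; the inequality $d\leq\sigma'|_{X\times X}$ forces $\tilde{d}|_{X\times X}=d$, while $\tilde{d}\leq\sigma'$ shows $\tilde{d}$ is quasi-uniform with respect to $\mathscr{U}_{Y}$, hence continuous into $(\mathbb{R},\mathscr{U}^{\star})$ by Lemma~\ref{l8}. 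Thus $X$ is quasi-P$^{\ast}$-embedded in $Y$.

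I expect the main obstacle to be the necessity direction, and within it two points. First, the scaling identity $\hat{\sigma}_{A}(e,m(-x+y))=m\,\sigma(x,y)$ must be read off carefully from Lemma~\ref{l2}: one has to check that in the free Abelian setting the only cost-minimal representation of $m(-x+y)$ by pairs drawn from $\{\pm x,\pm y\}$ uses $m$ copies of $-x+y$, so that no cheaper scheme undercuts the bound, and to track the off-by-one in the choice of optimal $m$ so as to land on the clean constant $2$. Second, verifying that the extension $\tilde{d}$ is genuinely a quasi-pseudometric requires the triangle-inequality case analysis for the inf-convolution $\rho'$ and its minimum with $\sigma'$; this is routine but must be carried out. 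Securing that $\sigma$ may be taken quasi-uniform, so that Lemma~\ref{l8} applies, is a minor point, handled by appealing to the explicit construction inside the proof of Theorem~\ref{t0} rather than to its bare statement.
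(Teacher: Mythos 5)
Your proposal is correct, and its sufficiency half is essentially the paper's own argument: extend $\rho\in\mathscr{P}_{X}$ over $Y$ via quasi-P$^{\ast}$-embeddedness, and use Lemma~\ref{l2} together with the fact that an element of $A_{a}(X)$ has its normal form supported in $X$ to get $V_{\sigma}\cap A_{a}(X)\subseteq V_{\rho}$ (the paper phrases this as the equality $A_{a}(X, Y)\cap V_{d}=V_{\rho}$; your one-sided inclusion is all that is needed, and is in fact the more carefully stated version, since the identity of the two Graev extensions is only certified by Lemma~\ref{l2} below level $1$). The necessity half, however, takes a genuinely different route. The paper uses only single pairs: from $V_{\rho}\cap AP(X)\subseteq V_{d}$ and $\hat{\rho}_{A}(e, -x+y)=\rho(x, y)$ it extracts the entourage inclusion $U_{\rho}\cap(X\times X)\subseteq U_{d}$, concludes that $\mathscr{U}_{Y}\mid X=\mathscr{U}_{X}$, and then delegates the actual extension to Lemma~\ref{l7}, whose proof manufactures a dominating quasi-pseudometric on $Y$ as the weighted series $8\sum_{i=1}^{\infty}2^{-i}d_{i}$ over countably many entourages before performing the same inf-convolution you use. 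You instead exploit the group structure: the scaling computation on $m(-x+y)$ (valid, since any representation of $m(-x+y)$ of length $2m$ by pairs $-y_{i}+z_{i}$ with $y_{i}, z_{i}\in\{x, y\}$, as produced by Lemma~\ref{l2}, must consist of $m$ copies of $-x+y$) upgrades a single unit-ball inclusion to the global pointwise bound $d\leq 2\sigma$ on $X\times X$, so that one quasi-pseudometric $\sigma'=2\sigma$ already dominates $d$ and a single inf-convolution finishes; this is in the spirit of Tkachenko's classical argument for $A(X)$ and bypasses Lemma~\ref{l7} entirely, at the cost of the counting argument and of appealing to the proof rather than the statement of Theorem~\ref{t0} for the quasi-uniformity of $\sigma$ with respect to $\mathscr{U}_{Y}$ --- which is indeed available there, since $\sigma=\min\{1, 4\rho_{1}\}$ with $\rho_{1}$ produced by Lemma~\ref{l0} from a sequence in $\mathscr{U}_{Y}$. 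What each approach buys: the paper's detour through $\mathscr{U}_{Y}\mid X=\mathscr{U}_{X}$ isolates a reusable quasi-uniform fact (Lemma~\ref{l7}, echoed by condition (3) of Lemma~\ref{l9} in the $FP$-case), while your version is shorter, needs only one quasi-pseudometric on $Y$, and yields the quantitative domination constant $2$. Your three flagged verifications are exactly the right ones and all go through: the off-by-one in the optimal $m$ (where the boundary cases $\sigma(x, y)=0$ and $\sigma(x, y)=1$ need, respectively, letting $m\to\infty$ and the normalization $d\leq 1$), the triangle inequality for $\min\{\sigma', \rho'\}$ (which works precisely because $d\leq\sigma'$ on $X\times X$), and Lemma~\ref{l8} for continuity into $(\mathbb{R}, \mathscr{U}^{\star})$.
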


\begin{proof}
Necessity. Let $d$ be an arbitrary bounded continuous quasi-pseudometric from $(X\times X, \mathscr{U}_{X}^{-1}\times \mathscr{U}_{X})$ to $(\mathbb{R}, \mathscr{U}^{\star})$, where $\mathscr{U}_{X}$ is the universal quasi-uniformity on $X$. Then $U_{d}=\{(x, y)\in X\times X: d(x, y)<1\}\in\mathscr{U}_{X}$. Put $V_{d}=\{g\in AP(X): \hat{d}(e, g)<1\}$. Then $V_{d}$ is a neighborhood of the neutral element of $AP(X)$. Since $AP(X)\subset AP(Y)$, it follows from Theorem~\ref{t0} that there is some continuous quasi-pseudometric $\rho$ from $(Y\times Y, \mathscr{U}_{Y}^{-1}\times \mathscr{U}_{Y})$ to $(\mathbb{R}, \mathscr{U}^{\star})$ such that $V_{\rho}\cap AP(X)\subset V_{d}$, where $\mathscr{U}_{Y}$ is the universal quasi-uniformity on $Y$ and $V_{\rho}=\{g\in AP(Y): \hat{\rho}(e, g)<1\}$. Note that $U_{\rho}=\{(x, y)\in Y\times Y: \rho(x, y)<1\}\in\mathscr{U}_{Y}$ and $U_{\rho}\cap (X\times X)\subset U_{d}$. Moreover, one can see that $\hat{\rho}(e, x^{-1}y)=\rho(x, y)$ and $\hat{d}(e, x^{-1}y)=d(x, y)$ for all $x, y$. Therefore, $(X, \mathscr{U}_{X})$ is a quasi-uniform subspace of $(Y, \mathscr{U}_{Y})$. Hence $X$ is quasi-P$^{\ast}$-embedded in $Y$ by Lemma~\ref{l7}.

Sufficiency. Let $X$ be quasi-P$^{\ast}$-embedded in $Y$. Denote by $e_{X, Y}$ the identity embedding of $X$ in $Y$. Obviously, the monomorphism $\hat{e}_{X, Y}$ is continuous. Next, we need to show that the isomorphism $\hat{e}_{X, Y}^{-1}: AP(X, Y)\rightarrow AP(X)$ is continuous. Assume that $U$ is a neighborhood of the neutral element $e_{X}$ in $AP(X)$. It follows from Theorem~\ref{t0} that there is a continuous quasi-pseudometric $\rho$ from $(X\times X, \mathscr{U}_{X}^{-1}\times \mathscr{U}_{X})$ to $(\mathbb{R}, \mathscr{U}^{\star})$ such that $V_{\rho}=\{g\in AP(X): \hat{\rho}_{A}(e_{X}, g)<1\}\subset U$. Without loss of generality, we may assume that $\rho\leq 1$ (otherwise, replace $\rho$ with $\rho^{\prime}=\mbox{min}\{\rho, 1\}$). Since $X$ is quasi-P$^{\ast}$-embedded in $Y$, the quasi-pseudometric $\rho$ can be extended to a continuous quasi-pseudometric $d$ from $(Y\times Y, \mathscr{U}_{Y}^{-1}\times \mathscr{U}_{Y})$ to $(\mathbb{R}, \mathscr{U}^{\star})$. Suppose that $\hat{d}_{A}$ is the Graev extension of $d$ over $AP(Y)$. It follows from Theorem~\ref{t0} again that $V_{d}=\{g\in AP(Y): \hat{d}_{A}(e_{Y}, g)<1\}$ is an open neighborhood of the neutral element $e_{Y}$ in $AP(Y)$. Obviously, one can identify the abstract group $A_{a}(X)$ with the subgroup $\hat{e}_{X, Y}(A_{a}(X))=A_{a}(X, Y)$ of $A_{a}(Y)$ generated by the subset $X$ of $A_{a}(Y)$. Since $d\mid X=\rho$, it follows from Lemma~\ref{l2} that, for each $h\in A_{a}(X, Y)$, $\hat{d}_{A}(e_{Y}, h)=\hat{\rho}_{A}(e_{Y}, h)$. Hence we have $A_{a}(X, Y)\cap V_{d}=V_{\rho}$, that is, $AP(X, Y)\cap V_{d}=\hat{e}_{X, Y}(V_{\rho})$. Therefore, the isomorphism $\hat{e}_{X, Y}^{-1}: AP(X, Y)\rightarrow AP(X)$ is continuous.
\end{proof}

In order to give a partial answer to Question~\ref{q1}, we need to prove some lemmas.

\begin{lemma}\label{l6}Let $X$ be a Tychonoff space.
Then the restriction $\mathscr{G}_{X}=\mathscr{G}_{PG(X)}\mid X$ of the left uniformity $\mathscr{G}_{PG(X)}$ of the paratopological group $PG(X)$ to the subspace $X\subset PG(X)$ coincides with the universal quasi-uniformity $\mathscr{U}_{X}$ of $X$.
\end{lemma}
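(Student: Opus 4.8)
The plan is to prove the two inclusions $\mathscr{G}_{X}\subseteq\mathscr{U}_{X}$ and $\mathscr{U}_{X}\subseteq\mathscr{G}_{X}$ separately. Throughout I will use two bases. First, by Lemmas~\ref{l1} and~\ref{l8}, the sets $\{(x,y)\in X\times X:\rho(x,y)<\varepsilon\}$ with $\rho\in\mathscr{P}_{X}$ and $\varepsilon>0$ form a base of the universal quasi-uniformity $\mathscr{U}_{X}$. Second, by definition a base of $\mathscr{G}_{X}=\mathscr{G}_{PG(X)}\mid X$ is given by the sets $W_{V}^{l}\cap(X\times X)=\{(x,y)\in X\times X:x^{-1}y\in V\}$, where $V$ runs over the open neighborhoods of $e$ in $PG(X)$.

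For the inclusion $\mathscr{G}_{X}\subseteq\mathscr{U}_{X}$ I would argue as follows. Since $X$ embeds as a topological subspace of $PG(X)$ (by \cite{RS}) and the left quasi-uniformity $\mathscr{G}_{PG(X)}$ induces the topology of $PG(X)$ — indeed the $W_{V}^{l}$-ball at a point $x$ is exactly the neighborhood $xV$ — its restriction $\mathscr{G}_{X}$ induces on $X$ the subspace topology, which is the original topology of $X$. As $\mathscr{U}_{X}$ is the \emph{finest} quasi-uniformity compatible with this topology, it contains every compatible quasi-uniformity, and in particular $\mathscr{G}_{X}\subseteq\mathscr{U}_{X}$.

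The substantial direction is $\mathscr{U}_{X}\subseteq\mathscr{G}_{X}$. Given $U\in\mathscr{U}_{X}$, Lemma~\ref{l1} yields a quasi-pseudometric $\rho$ bounded by $1$ which is quasi-uniform with respect to $\mathscr{U}_{X}$ (so $\rho\in\mathscr{P}_{X}$ by Lemma~\ref{l8}) and satisfies $\{(x,y):\rho(x,y)<1\}\subseteq U$. I then form the Graev extension $\hat{\rho}$ and set $V=U_{\rho}(1)=\{g\in PG(X):\hat{\rho}(e,g)<1\}$. I claim $V$ is a neighborhood of $e$ in $PG(X)$: in the Abelian case this is Theorem~\ref{t0}, and in general it follows from Lemma~\ref{l11}, since the paratopological group topology $\mathscr{F}_{\rho}$ restricts to the (continuous, hence coarser) $\rho$-topology on $X$, so the inclusion $X\hookrightarrow(F_{a}(X),\mathscr{F}_{\rho})$ is continuous and extends by freeness to the identity homomorphism, forcing $\mathscr{F}_{\rho}\subseteq\tau(PG(X))$ and making the $\mathscr{F}_{\rho}$-open set $U_{\rho}(1)$ open in $PG(X)$. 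It then remains to show that the corresponding entourage $W_{V}^{l}\cap(X\times X)=\{(x,y)\in X\times X:N_{\rho}(x^{-1}y)<1\}$ is contained in $U$.

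The crux is the restriction identity $\hat{\rho}(e,x^{-1}y)=\rho(x,y)$ for $x,y\in X$. The inequality $\le$ is immediate by evaluating $\Gamma_{\rho}$ on the length-two word $x^{-1}y$ with its unique scheme. For the reverse inequality I would invoke Lemma~\ref{l2} (respectively Lemma~\ref{l10} in the non-Abelian case): if $N_{\rho}(x^{-1}y)<1$, then the defining infimum is attained on a minimal representation all of whose letters lie in $X$, and since $x^{-1}y$ has normal-form length two, such a representation must be $x^{-1}y$ itself, giving value $\rho(x,y)$. Consequently $N_{\rho}(x^{-1}y)<1$ forces $\rho(x,y)=N_{\rho}(x^{-1}y)<1$, so $(x,y)\in\{(x,y):\rho(x,y)<1\}\subseteq U$. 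This proves $W_{V}^{l}\cap(X\times X)\subseteq U$, whence $U\in\mathscr{G}_{X}$ and $\mathscr{U}_{X}\subseteq\mathscr{G}_{X}$. I expect the main obstacle to be precisely the verification that the Graev ball $U_{\rho}(1)$ is a genuine neighborhood of $e$ in $FP(X)$ — where no analogue of Theorem~\ref{t0} is available — together with the restriction identity $\hat{\rho}\mid X\times X=\rho$, which must be extracted from the scheme analysis of \cite{RS}.
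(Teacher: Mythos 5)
Your proposal is correct and follows essentially the same route as the paper's proof: the easy inclusion $\mathscr{G}_{X}\subseteq\mathscr{U}_{X}$ from the fact that $\mathscr{G}_{X}$ is compatible with the topology of $X$ and $\mathscr{U}_{X}$ is the finest such quasi-uniformity, and the reverse inclusion by producing $\rho$ via Lemmas~\ref{l1} and~\ref{l8}, passing to the Graev extension $\hat{\rho}$, taking the ball $V=\{g:\hat{\rho}(e,g)<1\}$, and using $\hat{\rho}(e,x^{-1}y)=\rho(x,y)$ to conclude $W_{V}^{l}\cap(X\times X)\subseteq U$.

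The one place you genuinely diverge is in justifying that $V$ is a neighborhood of $e$ in $PG(X)$: the paper simply cites Theorem~\ref{t0}, which as stated covers only $AP(X)$ (and the paper explicitly says it does not know whether the analogous base description holds for $FP(X)$), so in the non-Abelian case that citation is loose. Your argument via Lemma~\ref{l11} --- that $\mathscr{F}_{\rho}$ is a paratopological group topology whose trace on $X$ is the $\rho$-topology, which is coarser than the original topology because $\rho$ is quasi-uniform with respect to $\mathscr{U}_{X}$, so that by the universal property of $FP(X)$ the identity $FP(X)\rightarrow(F_{a}(X),\mathscr{F}_{\rho})$ is continuous and $U_{\rho}(1)$ is a neighborhood of $e$ --- supplies exactly the missing justification (this is also how the fact is established in the source \cite{RS}), and is the more careful reading; note that being a neighborhood is all that is needed here, not the full base statement of Theorem~\ref{t0}. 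One cosmetic imprecision: in arguing $\hat{\rho}(e,x^{-1}y)=\rho(x,y)$ you say the optimal representation ``must be $x^{-1}y$ itself,'' whereas Lemma~\ref{l10} allows almost irreducible words padded with letters $e$ (of length up to $4$ here); but any scheme pairing $x^{-1}$ or $y$ with $e$ contributes at least $1$, and $\rho^{\ast}(e,e)=0$, so the padded words give no smaller value and your conclusion stands --- this is in any case exactly the extension property $\hat{\rho}\mid X\times X=\rho$ from Theorem 3.2 of \cite{RS}, which the paper invokes implicitly at the same point.
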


\begin{proof}
Since the topology on $X$ generated by the left uniformity $\mathscr{G}_{PG(X)}$ of $PG(X)$ coincides with the original topology of the space $X$, we have $\mathscr{G}_{X}\subset \mathscr{U}_{X}$. Next, we need to show that $\mathscr{U}_{X}\subset\mathscr{G}_{X}$. Take an arbitrary element $U\in \mathscr{U}_{X}$. It follows from Lemmas~\ref{l1} and~\ref{l8} that there exists a continuous quasi-pseudometric $\rho$ from $(X\times X, \mathscr{U}_{X}^{-1}\times \mathscr{U}_{X})$ to $(\mathbb{R}, \mathscr{U}^{\star})$ such that $\{(x, y)\in X\times X: \rho(x, y)<1\}\subset U.$ By Theorem 3.2 in \cite{RS}, the quasi-pseudometric $\rho$ on set $X$ extends to a left invariant quasi-pseudometric $\hat{\rho}$ on the abstract group $PG(X)$. One can see that $\hat{\rho}$ is continuous from $(PG(X)\times PG(X), \mathscr{U}_{PG(X)}\times \mathscr{U}_{PG(X)}^{-1})$ to $(\mathbb{R}, \mathscr{U}^{\star})$. It follows from Theorem~\ref{t0} that $V=\{g\in PG(X): \hat{\rho}(e, g)<1\}$ is an open neighborhood of the neutral element $e$ in $PG(X)$. If $x, y\in X$ and $x^{-1}y\in V$, then $$\rho(x, y)=\hat{\rho}(x, y)=\hat{\rho}(e, x^{-1}y)<1,$$which implies that the element $W_{V}^{l}=\{(g, h)\in G\times G: g^{-1}h\in V\}$ of $\mathscr{G}_{PG(X)}$ satisfies $W_{V}^{l}\cap (X\times X)\subset U$. Therefore, $\mathscr{U}_{X}\subset\mathscr{G}_{X}$.
\end{proof}

\begin{lemma}\label{l5}\cite{RS1}
The finest quasi-uniformity of each quasi-pseudometrizable topological space is bicomplete.
\end{lemma}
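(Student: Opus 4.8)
The plan is to unwind the definition and reduce bicompleteness to a statement about Cauchy filters of a single uniformity. Since the finest quasi-uniformity inducing the given topology is precisely the universal quasi-uniformity $\mathscr{U}_{X}$, the claim is that $(X,\mathscr{U}_{X}^{\ast})$ is complete, where $\mathscr{U}_{X}^{\ast}$ is the smallest uniformity containing $\mathscr{U}_{X}$, with base $\{U\cap U^{-1}:U\in\mathscr{U}_{X}\}$. Because $X$ is quasi-pseudometrizable, fix a compatible quasi-pseudometric $d$, bounded by $1$; as $\mathscr{F}(d)$ is the original topology, the quasi-uniformity $\mathscr{U}_{d}$ generated by the $d$-balls is compatible, hence $\mathscr{U}_{d}\subseteq\mathscr{U}_{X}$ and $\mathscr{U}_{d}^{\ast}\subseteq\mathscr{U}_{X}^{\ast}$. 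I would first note that a filter is $\mathscr{U}_{X}^{\ast}$-Cauchy exactly when for every $U\in\mathscr{U}_{X}$ some member $F$ satisfies $F\times F\subseteq U$ (the set $F\times F$ being a symmetric relation). Feeding in the entourages $\{(x,y):d(x,y)<2^{-n}\}\in\mathscr{U}_{d}$, every $\mathscr{U}_{X}^{\ast}$-Cauchy filter is $d^{s}$-Cauchy for the symmetrization $d^{s}=\max\{d,d^{-1}\}$, so it admits a decreasing base $\{F_{n}\}$ whose $d^{s}$-diameter tends to $0$.

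The second step is to prove that such a filter $\mathscr{F}$ converges in $\tau(\mathscr{U}_{X}^{\ast})$. I would split on cluster points: a Cauchy filter that has a cluster point in a uniform space converges to it, so if $\mathscr{F}$ clusters at some point of $(X,\tau(\mathscr{U}_{X}^{\ast}))$ we are done. The whole content therefore lies in excluding the opposite case. Assuming $\mathscr{F}$ has no cluster point, I would choose representatives $x_{n}\in F_{n}$; this sequence is $d^{s}$-Cauchy yet has no cluster point in $\tau(\mathscr{U}_{X}^{\ast})$, so $\{x_{n}:n\in\mathbb{N}\}$ behaves as a closed discrete set there, and for each $z\in X$ there are $U_{z}\in\mathscr{U}_{X}$ and $F^{z}\in\mathscr{F}$ with $(U_{z}\cap U_{z}^{-1})(z)\cap F^{z}=\emptyset$. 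The aim is to manufacture from this data a single entourage $W\in\mathscr{U}_{X}$ for which no member of $\mathscr{F}$ can satisfy $F\times F\subseteq W\cap W^{-1}$, contradicting Cauchyness.

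The hard part is exactly this manufacture, because $\mathscr{U}_{X}$ is the \emph{finest} compatible quasi-uniformity: a single open neighbornet $V$ (a relation with $V(x)$ a $\tau$-neighborhood of $x$) need not lie in $\mathscr{U}_{X}$, since the filter it generates may violate the composition axiom. I would therefore build not one neighbornet but a descending sequence $V_{0}=X\times X\supseteq V_{1}\supseteq V_{2}\supseteq\cdots$ of open neighbornets with $V_{i+1}\circ V_{i+1}\circ V_{i+1}\subseteq V_{i}$, engineered so that $V_{1}$ isolates the tail of $(x_{n})$; such a chain generates a compatible quasi-pseudometric by Lemma~\ref{l0}, hence an entourage belonging to $\mathscr{U}_{X}$, which is what makes the contradiction legitimate. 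The delicate points are to keep every $V_{i}$ a genuine $\tau$-neighbornet while simultaneously separating the points $x_{n}$, and to control the conjugate topology $\tau(\mathscr{U}_{X}^{-1})$, which for the fine quasi-uniformity may be strictly finer than the $d$-conjugate topology $\mathscr{F}(d^{-1})$ and so cannot simply be read off from $d$. Once the separating entourage is in place, Cauchyness fails, the no-cluster-point case is impossible, and completeness of $(X,\mathscr{U}_{X}^{\ast})$ follows.
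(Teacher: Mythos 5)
You are reproving an imported result: the paper offers no argument for Lemma~\ref{l5} at all, but quotes it from Romaguera and Salbany \cite{RS1}, so your attempt has to stand on its own as a proof of their theorem. Your preliminary reductions are sound: the fine compatible quasi-uniformity is $\mathscr{U}_{X}$; since $F\times F$ is a symmetric relation, $\mathscr{U}_{X}^{\ast}$-Cauchyness reduces to the condition ``for each $U\in\mathscr{U}_{X}$ some member $F$ satisfies $F\times F\subseteq U$''; every such filter is $d^{s}$-Cauchy; and a Cauchy filter with a $\tau(\mathscr{U}_{X}^{\ast})$-cluster point converges. But the proof stops exactly where the theorem begins. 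The production of a single entourage $W\in\mathscr{U}_{X}$ with $F\times F\not\subseteq W\cap W^{-1}$ for every $F\in\mathscr{F}$ is announced as ``the aim,'' and the normal sequence of neighbornets feeding Lemma~\ref{l0} is merely required to exist (``engineered so that\dots''), with the two ``delicate points'' you yourself identify left unresolved. Constructing a member of the \emph{fine} quasi-uniformity that separates a prescribed nonconvergent Cauchy filter is precisely the content of \cite{RS1}; what you have written is a correct framing plus a restatement of that problem, not a solution of it.

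Worse, the specific plan ``$V_{1}$ isolates the tail of $(x_{n})$'' fails in the very case that must be excluded. Note that a $\mathscr{U}_{X}^{\ast}$-Cauchy filter with a mere $\tau$-cluster point $x$ already $\tau$-converges to $x$: given $U\in\mathscr{U}_{X}$, choose $F$ with $F\times F\subseteq U$ and $y\in F\cap U(x)$ (here $U(x)$ \emph{is} a $\tau$-neighborhood), whence $F\subseteq (U\circ U)(x)$. Likewise, if some subsequence of your representatives $(x_{n})$ had a $\tau$-cluster point $y$, the filter would $\tau$-converge to $y$, since the $F_{n}$ form a base. So the hypothetical bad filter splits into two subcases: either $(x_{n})$ is genuinely $\tau$-closed-discrete, or the filter $\tau$-converges to some $x$ while failing convergence in $\tau(\mathscr{U}_{X}^{\ast})$ (equivalently, on the conjugate side). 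In the second subcase your plan is structurally impossible: the $V_{i}$ must be $\tau$-neighbornets, so $V_{1}(z)$ for $z$ in a $\tau$-neighborhood of $x$ necessarily contains a tail of $(x_{n})$, and no such chain ``isolates the tail.'' There the contradiction with Cauchyness can only be extracted from the conjugate relations $V_{i}^{-1}$ --- one must keep each $V_{i}(z)$ a $\tau$-neighborhood, preserve $V_{i+1}\circ V_{i+1}\circ V_{i+1}\subseteq V_{i}$, and still force every $F$ to contain a $W^{-1}$-far pair --- which is exactly the step you concede ``cannot simply be read off from $d$'' and then defer. Until that construction is carried out (or the argument of \cite{RS1} is reproduced), the no-cluster-point case, and with it the lemma, remains unproved.
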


\begin{lemma}\label{l9}
Let $X$ be a subspace of a Tychonoff space $Y$, and let $X$ be $\tau(\tilde{\mathscr{U}_{Y}}^{\ast})$-dense in $(\tilde{Y}, \tilde{\mathscr{U}_{Y}})$, where $\mathscr{U}_{Y}$ is the universal quasi-uniformity and $(\tilde{Y}, \tilde{\mathscr{U}_{Y}})$ is the bicompletion of $(Y, \mathscr{U}_{Y})$. Then the following conditions are equivalent:
\begin{enumerate}
\item $X$ is quasi-P$^{\ast}$-embedded in $Y$;

\item $X$ is quasi-P-embedded in $Y$;

\item $\mathscr{U}_{Y}\mid X=\mathscr{U}_{X}$;

\item $X\subset Y\subset \tilde{X}$, where $(\tilde{X}, \tilde{\mathscr{U}_{X}})$ is the bicompletion of $(X, \mathscr{U}_{X})$.
\end{enumerate}
\end{lemma}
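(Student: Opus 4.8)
The plan is to prove the equivalence through the cycle $(2)\Rightarrow(1)\Rightarrow(3)\Rightarrow(4)\Rightarrow(2)$, arranging matters so that the density hypothesis is invoked only in the step $(3)\Rightarrow(4)$, precisely where the passage from bounded to arbitrary continuous quasi-pseudometrics takes place. The implication $(2)\Rightarrow(1)$ is immediate, since every bounded continuous quasi-pseudometric on $X$ is in particular a continuous quasi-pseudometric, so if all of the latter extend over $Y$ then so do the bounded ones.

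For $(1)\Rightarrow(3)$ I would repeat the necessity argument of the previous theorem. As $\mathscr{U}_{Y}\mid X$ induces the topology of $X$ and $\mathscr{U}_{X}$ is the finest quasi-uniformity doing so, one always has $\mathscr{U}_{Y}\mid X\subseteq\mathscr{U}_{X}$, so only $\mathscr{U}_{X}\subseteq\mathscr{U}_{Y}\mid X$ needs proof. Given $U\in\mathscr{U}_{X}$, Lemmas~\ref{l1} and~\ref{l8} yield a continuous quasi-pseudometric $\rho\leq 1$ on $X$ with $\{(x,y):\rho(x,y)<1\}\subseteq U$; quasi-P$^{\ast}$-embeddedness extends $\rho$ to a continuous quasi-pseudometric $d$ on $Y$. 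The point is that such a $d$ is automatically quasi-uniform with respect to $\mathscr{U}_{Y}$: its $d$-balls are $\tau(\mathscr{U}_{Y})$-open, so the quasi-uniformity it generates induces a topology coarser than $\tau(\mathscr{U}_{Y})$ and hence, by maximality of the universal quasi-uniformity, is contained in $\mathscr{U}_{Y}$. Thus $\{(x,y)\in Y\times Y:d(x,y)<1\}\in\mathscr{U}_{Y}$, and intersecting with $X\times X$ (where $d=\rho$) exhibits an element of $\mathscr{U}_{Y}\mid X$ inside $U$. (Conversely, $(3)\Rightarrow(1)$ is exactly Lemma~\ref{l7}.)

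The crux is $(3)\Rightarrow(4)$, and this is where I expect the main difficulty. Assuming $\mathscr{U}_{Y}\mid X=\mathscr{U}_{X}$, the inclusions $X\subseteq Y\subseteq\tilde{Y}$ give $\tilde{\mathscr{U}_{Y}}\mid X=(\tilde{\mathscr{U}_{Y}}\mid Y)\mid X=\mathscr{U}_{Y}\mid X=\mathscr{U}_{X}$, so $(X,\mathscr{U}_{X})$ is a quasi-uniform subspace of the bicomplete space $(\tilde{Y},\tilde{\mathscr{U}_{Y}})$. I would then use the fact that the bicompletion of a quasi-uniform subspace of a bicomplete space coincides with its closure in the associated uniform topology; this identifies $(\tilde{X},\tilde{\mathscr{U}_{X}})$ with the $\tau(\tilde{\mathscr{U}_{Y}}^{\ast})$-closure of $X$ in $\tilde{Y}$, which by the density hypothesis is all of $\tilde{Y}$. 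Hence $\tilde{X}=\tilde{Y}$, and therefore $X\subseteq Y\subseteq\tilde{Y}=\tilde{X}$, which is $(4)$. The delicate point is justifying this identification of the subspace bicompletion with the closure, together with the compatibility $\tilde{\mathscr{U}_{X}}=\tilde{\mathscr{U}_{Y}}$ of the two quasi-uniformities.

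Finally, $(4)\Rightarrow(2)$ proceeds by extending across the bicompletion. Let $\rho$ be any continuous quasi-pseudometric on $X$; as in the second step it is quasi-uniform with respect to $\mathscr{U}_{X}$, and a one-line triangle-inequality estimate shows it is quasi-uniformly continuous as a map $(X\times X,\mathscr{U}_{X}^{-1}\times\mathscr{U}_{X})\to(\mathbb{R},\mathscr{U}^{\star})$. Since $(\mathbb{R},\mathscr{U}^{\star})$ is bicomplete (its associated uniformity is the usual complete uniformity of $\mathbb{R}$) and $(\tilde{X}\times\tilde{X},\tilde{\mathscr{U}_{X}}^{-1}\times\tilde{\mathscr{U}_{X}})$ is the bicompletion of $(X\times X,\mathscr{U}_{X}^{-1}\times\mathscr{U}_{X})$, the map $\rho$ extends to a quasi-uniformly continuous $\tilde{\rho}$ on $\tilde{X}\times\tilde{X}$, which is again a quasi-pseudometric by density and continuity. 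Restricting $\tilde{\rho}$ to $Y\times Y$ (using $Y\subseteq\tilde{X}$) gives a quasi-pseudometric $d$ extending $\rho$; because the inclusion $Y\hookrightarrow\tilde{X}$ is a quasi-uniform embedding with $\tilde{\mathscr{U}_{X}}\mid Y=\mathscr{U}_{Y}$, the map $d$ is quasi-uniformly continuous, hence continuous, from $(Y\times Y,\mathscr{U}_{Y}^{-1}\times\mathscr{U}_{Y})$ to $(\mathbb{R},\mathscr{U}^{\star})$, so $X$ is quasi-P-embedded in $Y$. Throughout, the genuine work lies not in any single inequality but in the careful bookkeeping of how the universal quasi-uniformities behave under restriction and bicompletion.
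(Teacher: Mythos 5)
Your cycle $(2)\Rightarrow(1)\Rightarrow(3)\Rightarrow(4)\Rightarrow(2)$ is exactly the paper's, and your first three implications match its proof step for step: $(2)\Rightarrow(1)$ is trivial; $(1)\Rightarrow(3)$ extends a quasi-pseudometric obtained from Lemmas~\ref{l1} and~\ref{l8} (your explicit maximality argument showing that any continuous quasi-pseudometric on $Y$ generates a quasi-uniformity contained in $\mathscr{U}_{Y}$, so that $\{(x,y): d(x,y)<1\}\in\mathscr{U}_{Y}$, is a justification the paper leaves as ``obvious''); and $(3)\Rightarrow(4)$ uses the same uniqueness-of-bicompletion fact via the $\tau(\tilde{\mathscr{U}_{Y}}^{\ast})$-density hypothesis. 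Where you genuinely diverge is $(4)\Rightarrow(2)$. The paper does \emph{not} extend $\rho$ directly: it passes to the quotient quasi-metric space $(\overline{X},\overline{\rho})$ obtained by identifying points at $\rho$-distance zero, notes via Lemma~\ref{l5} (Romaguera--Salbany) that the universal quasi-uniformity of this quasi-metrizable space is bicomplete, extends the \emph{one-variable} quotient map $\pi$ to $\overline{\pi}:(\tilde{X},\tilde{\mathscr{U}}_{X})\rightarrow(\overline{X},\mathscr{U}_{\overline{X}})$ by Theorem 16 of \cite{LW}, and sets $d(x,y)=\overline{\rho}(\overline{\pi}(x),\overline{\pi}(y))$ on $Y\times Y$; the quasi-pseudometric axioms for $d$ then come for free from $\overline{\rho}$. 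You instead extend the \emph{two-variable} map $\rho$ over the bicompletion of $(X\times X,\mathscr{U}_{X}^{-1}\times\mathscr{U}_{X})$, using bicompleteness of $(\mathbb{R},\mathscr{U}^{\star})$ (correct: $(\mathscr{U}^{\star})^{\ast}$ is the usual complete metric uniformity of $\mathbb{R}$). This works, but it costs you two things the paper's factorization avoids: first, you must justify identifying the bicompletion of $(X\times X,\mathscr{U}_{X}^{-1}\times\mathscr{U}_{X})$ with $(\tilde{X}\times\tilde{X},\tilde{\mathscr{U}_{X}}^{-1}\times\tilde{\mathscr{U}_{X}})$, i.e.\ that bicompletion commutes with conjugation and finite products (standard in Fletcher--Lindgren theory, since $(\mathscr{U}^{-1})^{\ast}=\mathscr{U}^{\ast}$ and $(\mathscr{V}\times\mathscr{W})^{\ast}=\mathscr{V}^{\ast}\times\mathscr{W}^{\ast}$, but it should be said); second, your phrase ``again a quasi-pseudometric by density and continuity'' hides the only delicate point of your route, because $\tilde{\rho}$ is continuous only in the asymmetric $\mathscr{U}^{\star}$-sense, so inequalities do not pass to limits naively --- one must use the \emph{two-sided} $\tau(\tilde{\mathscr{U}_{X}}^{\ast})$-density of $X$ to pick $x,z,y\in X$ close to $\xi,\zeta,\eta$ with respect to both $\tilde{\mathscr{U}_{X}}$ and its conjugate, obtaining $\tilde{\rho}(\xi,\eta)<\rho(x,y)+\varepsilon\leq\rho(x,z)+\rho(z,y)+\varepsilon<\tilde{\rho}(\xi,\zeta)+\tilde{\rho}(\zeta,\eta)+3\varepsilon$, and similarly $\tilde{\rho}(\xi,\xi)=0$. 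With that verification supplied (and with the weaker, automatic inclusion $\tilde{\mathscr{U}_{X}}\mid Y\subset\mathscr{U}_{Y}$ in place of your asserted equality, which is all your final restriction step actually needs), your argument is a correct and self-contained alternative that trades the paper's two external citations (\cite{RS1} and \cite{BG}) for routine bicompletion bookkeeping.
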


\begin{proof}
Obviously, $(2)\Rightarrow (1)$. Hence it is suffices to show that $(1)\Rightarrow (3)\Rightarrow (4)\Rightarrow (2).$

$(1)\Rightarrow (3).$ Assume that $X$ is quasi-P$^{\ast}$-embedded in $Y$. For each $U\in \mathscr{U}_{X}$, it follows from Lemmas~\ref{l1} and~\ref{l8} that there exists a bounded continuous quasi-pseudometric $\rho_{X}$ from $(X\times X, \mathscr{U}_{X}^{-1}\times \mathscr{U}_{X})$ to $(\mathbb{R}, \mathscr{U}^{\star})$ such that $$W_{X}=\{(x, x^{\prime})\in X\times X: \rho_{X}(x, x^{\prime})<1\}\subset U.$$ Since $X$ is quasi-P$^{\ast}$-embedded in $Y$, let $\rho_{Y}$ be an extension of $\rho_{X}$ to a continuous quasi-pseudometric from $(Y\times Y, \mathscr{U}_{Y}^{-1}\times \mathscr{U}_{Y})$ to $(\mathbb{R}, \mathscr{U}^{\star})$. Put $$W_{Y}=\{(y, y^{\prime})\in Y\times Y: \rho_{Y}(y, y^{\prime})<1\}.$$ Then it is obvious that $W_{Y}\in \mathscr{U}_{Y}$ and $W_{Y}\cap (X\times X)=W_{X}\subset U.$ Therefore, the quasi-uniformity $\mathscr{U}_{Y}\mid X$ is finer than $\mathscr{U}_{X}$. Moreover, it is clear that $\mathscr{U}_{Y}\mid X\subset \mathscr{U}_{X}$. Hence $\mathscr{U}_{Y}\mid X=\mathscr{U}_{X}$.

$(3)\Rightarrow (4).$ Assume that $\mathscr{U}_{Y}\mid X=\mathscr{U}_{X}$. Let $(\tilde{Y}, \tilde{\mathscr{U}_{Y}})$ be the bicompletion of quasi-uniform space $(Y, \mathscr{U}_{Y})$. Because $\tilde{\mathscr{U}_{Y}}\mid Y=\mathscr{U}_{Y}$, we have $\tilde{\mathscr{U}_{Y}}\mid X=\mathscr{U}_{X}$. Moreover, since $X$ is $\tau(\tilde{\mathscr{U}_{Y}}^{\ast})$-dense in $\tilde{Y}$ and $X\subset Y$, $(\tilde{Y}, \tilde{\mathscr{U}_{Y}})$ is the bicompletion of the quasi-uniform space $(X, \mathscr{U}_{X})$. Hence $X\subset Y\subset \tilde{X}$.

$(4)\Rightarrow (2).$ Assume that $Y\subset\tilde{X}$. Consider an arbitrary continuous quasi-pseudometric $\rho$ from $(X\times X, \mathscr{U}_{X}^{-1}\times \mathscr{U}_{X})$ to $(\mathbb{R}, \mathscr{U}^{\star})$. Let $(\overline{X}, \overline{\rho})$ be the quasi-metric space obtained from $(X, \rho)$ by identifying the points of $X$ lying at zero distance one from another with respect to $\rho$. Let $\pi: X\rightarrow \overline{X}$ be the natural quotient mapping. Obviously, $\rho(x, y)=\overline{\rho}(\pi(x), \pi(y))$ for all $x, y\in X$. Suppose that $\mathscr{U}_{\overline{X}}$ is the universal quasi-uniformity on $\overline{X}$. Then $\pi$ is a quasi-uniformly continuous map from $(X, \mathscr{U}_{X})$ to $(\overline{X}, \mathscr{U}_{\overline{X}})$ by \cite{BG}. Moreover, by Lemma~\ref{l5}, $(\overline{X}, \mathscr{U}_{\overline{X}})$ is bicomplete. Therefore, it follows from Theorem 16 in \cite{LW} that $\pi$ admits a quasi-uniformly continuous extension $\overline{\pi}: (\tilde{X}, \tilde{\mathscr{U}}_{X})\rightarrow (\overline{X}, \mathscr{U}_{\overline{X}}).$ Since $Y\subset \tilde{X}$, we can define a continuous mapping of $d$ from $(Y\times Y, \mathscr{U}_{Y}^{-1}\times \mathscr{U}_{Y})$ to $(\mathbb{R}, \mathscr{U}^{\star})$ by $d(x, y)=\overline{\rho}(\overline{\pi}(x), \overline{\pi}(y))$ for all $x, y\in Y$. Clearly, the restriction of $d$ to $X$ coincides with $\rho$. Hence $X$ is quasi-P-embedded in $Y$.
\end{proof}

\begin{theorem}
Let $X$ be an arbitrary $\tau(\tilde{\mathscr{U}_{Y}}^{\ast})$-dense subspace of a Tychonoff space $Y$. If the natural mapping $\hat{e}_{X, Y}: FP(X)\rightarrow FP(Y)$ is a topological monomorphism, then $X$ is quasi-P-embedded in $Y$.
\end{theorem}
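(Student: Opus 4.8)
The plan is to extract from the topological-embedding hypothesis the single quasi-uniform equality $\mathscr{U}_Y\mid X=\mathscr{U}_X$, which is precisely condition $(3)$ of Lemma~\ref{l9}, and then to hand the work over to that lemma. Since $X$ is assumed to be $\tau(\tilde{\mathscr{U}_Y}^{\ast})$-dense in $\tilde{Y}$ — exactly the standing hypothesis of Lemma~\ref{l9} — the implication $(3)\Rightarrow(2)$ of that lemma will at once yield that $X$ is quasi-P-embedded in $Y$. Thus the entire content of the theorem is concentrated in establishing $\mathscr{U}_Y\mid X=\mathscr{U}_X$, and the density assumption is used only at this final step to activate Lemma~\ref{l9}.

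To produce this equality I would translate both quasi-uniformities into restrictions of the left quasi-uniformities of the relevant free paratopological groups, which is exactly what Lemma~\ref{l6} provides. Applying Lemma~\ref{l6} to $X$ gives $\mathscr{U}_X=\mathscr{G}_{FP(X)}\mid X$, while applying it to $Y$ gives $\mathscr{U}_Y=\mathscr{G}_{FP(Y)}\mid Y$, and hence $\mathscr{U}_Y\mid X=\mathscr{G}_{FP(Y)}\mid X$ after a further restriction. It therefore suffices to prove
\[\mathscr{G}_{FP(X)}\mid X=\mathscr{G}_{FP(Y)}\mid X,\]
where $X$ on the right is identified with its image under $\hat{e}_{X,Y}$; this identification is the identity on points of $X$ because $\hat{e}_{X,Y}$ extends $e_{X,Y}$. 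The decisive computation is the following. Write $FP(X,Y)=\hat{e}_{X,Y}(FP(X))$ for the subgroup of $FP(Y)$ generated by $X$, carrying the subspace topology. A basic entourage of $\mathscr{G}_{FP(X)}\mid X$ has the form $\{(x,y)\in X\times X:x^{-1}y\in U\}$ with $U$ a neighborhood of $e$ in $FP(X)$. Because $\hat{e}_{X,Y}$ is a topological monomorphism, the neighborhoods $U$ of $e$ in $FP(X)$ correspond precisely to the traces $V\cap FP(X,Y)$ of neighborhoods $V$ of $e$ in $FP(Y)$; and since $\hat{e}_{X,Y}$ is an injective homomorphism fixing $X$ pointwise, the element $x^{-1}y$ with $x,y\in X$ is the same whether formed in $FP(X)$ or in $FP(Y)$, and it always lies in $FP(X,Y)$. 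Consequently
\[\{(x,y)\in X\times X:x^{-1}y\in V\cap FP(X,Y)\}=\{(x,y)\in X\times X:x^{-1}y\in V\},\]
the right-hand side being a basic entourage of $\mathscr{G}_{FP(Y)}\mid X$. Reading this correspondence in both directions delivers $\mathscr{G}_{FP(X)}\mid X=\mathscr{G}_{FP(Y)}\mid X$, and therefore $\mathscr{U}_X=\mathscr{U}_Y\mid X$.

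The main obstacle I anticipate is not algebraic but lies in the careful bookkeeping of the three nested restrictions — descending from $FP(Y)$ to the subgroup $FP(X,Y)$ and then to the generating set $X$ — together with the point that the \emph{embedding} hypothesis is exactly what forces the two families of basic entourages on $X$ to coincide. Continuity of $\hat{e}_{X,Y}$ alone would give only one inclusion of the quasi-uniformities (hence the trivially valid direction $\mathscr{U}_Y\mid X\subset\mathscr{U}_X$); it is the openness of $\hat{e}_{X,Y}$ onto its image that supplies the reverse inclusion and pins down the equality. Once $\mathscr{U}_Y\mid X=\mathscr{U}_X$ is secured, the conclusion that $X$ is quasi-P-embedded in $Y$ follows formally from the implication $(3)\Rightarrow(2)$ of Lemma~\ref{l9}.
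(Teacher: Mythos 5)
Your proposal is correct and follows essentially the same route as the paper's own proof: both identify $FP(X)$ with the subgroup $FP(X,Y)$ of $FP(Y)$ via the embedding hypothesis, obtain $\mathscr{U}_{Y}\mid X=\mathscr{U}_{X}$ by applying Lemma~\ref{l6} to both groups (your entourage computation simply unpacks the paper's one-line claim that the left quasi-uniformity of $FP(Y)$ restricts on the subgroup to that of $FP(X)$), and conclude by the implication $(3)\Rightarrow(2)$ of Lemma~\ref{l9}, with the density hypothesis used only to invoke that lemma. There is no gap.
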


\begin{proof}
Assume that the monomorphism $\hat{e}_{X, Y}: FP(X)\rightarrow FP(Y)$ extending the identity mapping $e_{X, Y}: X\rightarrow Y$ is a topological embedding. Therefore, it is easy to see  that we can identify the group $FP(X)$ with the subgroup $FP(X, Y)$ of $FP(Y)$ generated by the set $X$. We denote by $\mathscr{G}_{X}$ and $\mathscr{G}_{Y}$ the left quasi-uniformities of the groups $FP(X)$ and $FP(Y)$, respectively. Since $FP(X)$ is a subgroup of $FP(Y)$, we obtain that $\mathscr{G}_{Y}\mid FP(X)=\mathscr{G}_{X}$. Moreover, it follows from Lemma~\ref{l6} that $\mathscr{G}_{X}\mid X=\mathscr{U}_{X}$ and $\mathscr{G}_{Y}\mid Y=\mathscr{U}_{Y}$. Hence we have $$\mathscr{G}_{Y}\mid X=\mathscr{G}_{X}\mid X=\mathscr{U}_{X}.$$ Therefore, it follows from Lemma~\ref{l9} that $X$ is quasi-P-embedded in $Y$.
\end{proof}

\begin{question}
Let $X$ be an arbitrary $\tau(\tilde{\mathscr{U}_{Y}}^{\ast})$-dense subspace of a Tychonoff space $Y$. If $X$ is quasi-P-embedded in $Y$, is the natural mapping $\hat{e}_{X, Y}: FP(X)\rightarrow FP(Y)$ a topological monomorphism?
\end{question}

In \cite{SO}, O.V. Sipacheva has proved that if $Y$ is a subspace of a Tychonoff $X$ then the subgroup $F(Y, X)$ of $F(X)$ is topologically isomorphic to $F(Y)$ iff $Y$ is $P^{\ast}$-embedded in $X$. In \cite{TM1}, M.G. Tkackenko has proved that if $Y$ is a subspace of a Tychonoff $X$ then the subgroup $A(Y, X)$ of $A(X)$ is topologically isomorphic to $A(Y)$ iff $Y$ is $P^{\ast}$-embedded in $X$.Therefore, we have the following question:

\begin{question}
Let $X$ be an arbitrary subspace of a Tychonoff space $Y$. Is it true that the subgroup $PG(Y, X)$ of $PG(X)$ is topologically isomorphic to $PG(Y)$ iff $Y$ is quasi-$P^{\ast}$-embedded in $X$?
\end{question}

{\bf Acknowledgements}. I wish to thank
the reviewers for the detailed list of corrections, suggestions to the paper, and all her/his efforts
in order to improve the paper.

\bigskip

\end{document}